\definecolor{dblue}{rgb}{0,0,0.7}
\definecolor{dartmouthgreen}{rgb}{0.05, 0.5, 0.06}
\definecolor{dlilac}{rgb}{0.6, 0.33, 0.73}
\definecolor{ffuchsia}{rgb}{0.96, 0.0, 0.63}
\definecolor{hpurple}{rgb}{0.32, 0.09, 0.98}
\newtheorem{thm}{Theorem}
\newtheorem{prop}[thm]{Proposition}
\newtheorem{cor}[thm]{Corollary}
\newtheorem{lem}[thm]{Lemma}
\newtheorem{rem}[thm]{Remark}
\numberwithin{thm}{section}
\numberwithin{equation}{section}
\theoremstyle{definition}
\newtheorem{defi}{Definition}
\newtheorem{exa}{Example}
\numberwithin{defi}{section}
\numberwithin{exa}{section}
\newcommand{\FF}{\mathbb{F}}
\newcommand{\QQ}{\mathbb{Q}}
\newcommand{\ZZ}{\mathbb{Z}}
\newcommand{\RR}{\mathbb{R}}
\newcommand{\CC}{\mathbb{C}}
\newcommand{\NN}{\mathbb{N}}
\newcommand{\AAA}{\mathbb{A}}
\newcommand{\PR}{\mathfrak{p}}
\newcommand{\cc}{\mathfrak{c}}
\DeclareMathOperator{\im}{Im} 
\DeclareMathOperator{\re}{Re} 
\DeclareMathOperator{\Res}{Res}
\DeclareMathOperator{\fin}{fin}
\DeclareMathOperator{\Hom}{Hom}
\DeclareMathOperator{\Gal}{Gal}
\DeclareMathOperator{\Frob}{Frob}
\DeclareMathOperator{\ext}{-ext}
\DeclareMathOperator{\br}{Br} 
\DeclareMathOperator{\ch}{H}
\DeclareMathOperator{\id}{id}
\title{Asymptotic for Galois group isomorphic to C3}
\date{November 2023}
\title{Abelian number fields with frobenian conditions}
\author{Julie Tavernier}
\date{July 2024}
\address{Julie Tavernier, Department of Mathematical Sciences, University of Bath,
Claverton Down, Bath, BA2 7AY, UK}
\email{jlt86@bath.ac.uk}
\begin{document}
\begin{abstract}
    We study the distribution of abelian number fields with frobenian conditions imposed on the conductor. In particular we find an asymptotic for the number of abelian field extensions of a number field $k$ whose conductor is the sum of two squares. We also discuss an application of the Brauer group of stacks to quadratic number fields.
\end{abstract}
\maketitle

\tableofcontents

\section{Introduction}

\subsection{Counting number fields}

Much work has been done on the theory of counting number fields. A theorem by Hermite and Minkowski states that for any integer $B >0$, there are only finitely many number fields $K$ satisfying $|\Delta_{K}| \leq B$, where $\Delta_{K}$ is the discriminant of $K$. A natural question that arises is whether it is possible to count these number fields. In \cite{malle1} and \cite{malle2}, Malle proposed a conjecture giving an asymptotic for the number of number fields of bounded discriminant and given Galois group. Indeed, he asserted that the asymptotic should be of the form \[c_{k,G,B} B^{a(G)}(\log B)^{b(k,G)-1}\] for some explicit $a(G)$ and $b(k,G)$. While this has not been proven in general, work has been done on specific cases where the number fields satisfy certain properties. For example, Wright \cite{Wright1989DistributionOD} found an asymptotic for the number of abelian number fields of bounded discriminant. In this paper we find a general counting result for abelian field extensions whose conductor satisfies some frobenian condition, using the theory of frobenian multiplicative functions. A special case of our result is the following.

\begin{thm} \label{SOTS}

Let $G$ be a finite abelian group, $k$ a number field and $\Phi(E/k)$ the norm of the conductor of the field extension $E/k$. Let $N(k,G, B)$ be the counting function \[N(k,G, B) = \# \{E/k : \Gal(E/k) \cong G, \Phi(E/k) = a^2+b^2, \Phi(E/k) \leq B\}\] that counts the abelian extensions of $k$ whose conductor is the sum of two squares. Then $N(k,G, B)$ satisfies the following asymptotic formula \[N(k,G,B)  \sim c_{k,  G}B(\log B)^{\varpi(k,G, f_{\square})-1} \quad \text{as} \; B \rightarrow \infty\] for a constant $c_{k,  G}$ that is positive and exponent $\varpi(k,G, f_{\square})$ given by \[\varpi(k,G, f_{\square}) = \frac{1}{2}\sum_{\substack{g \in G \backslash\{id\} \\ \mu_4 \not\subset k(\zeta_{|g|})}}\frac{1}{[k(\zeta_{|g|}):k]} + \sum_{\substack{g \in G \backslash\{id\} \\ \mu_4 \subset k(\zeta_{|g|})}}\frac{1}{[k(\zeta_{|g|}):k]}\]
    
\end{thm}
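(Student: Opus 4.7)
The plan is to prove Theorem~\ref{SOTS} by analysing the Dirichlet series
\[D(s) = \sum_{\substack{E/k \\ \Gal(E/k) \cong G}} f_{\square}(\Phi(E/k))\, \Phi(E/k)^{-s},\]
where $f_{\square}$ is the multiplicative indicator function of positive integers that are sums of two squares. The function $f_{\square}$ is frobenian, determined by the splitting behaviour of primes in $\QQ(i)/\QQ$, so the paper's general framework of frobenian multiplicative functions should apply. The strategy is to express $D(s)$ as an Euler product via class field theory, locate its singularity at $s = 1$, and then apply a Landau--Selberg--Delange-type Tauberian theorem.

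First I would parametrise $G$-extensions using class field theory: abelian extensions $E/k$ with $\Gal(E/k) \cong G$ correspond, up to automorphisms of $G$, to suitable continuous surjections $\Gal(\bar k/k)^{\mathrm{ab}} \twoheadrightarrow G$, equivalently to tuples of Hecke characters of $k$. The conductor is local, $\Phi(E/k) = \prod_{\PR} N_{k/\QQ}(\PR)^{e_{\PR}}$, with $e_{\PR}$ the local conductor exponent determined by the inertia restrictions of the characters. Since both the conductor and $f_{\square}$ are multiplicative, $D(s)$ rewrites as an Euler product $\prod_{\PR} D_{\PR}(s)$; the local factor at $\PR$ sums over possible local inertia subgroups $\langle g \rangle \leq G$, weighted by $f_{\square}(N\PR^{e(g,\PR)})\,N\PR^{-s\,e(g,\PR)}$.

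Next I would extract the singularity at $s = 1$. For each non-trivial $g \in G$, a prime $\PR$ of $k$ admits a character of order $|g|$ in its local inertia precisely when $N\PR \equiv 1 \pmod{|g|}$, i.e.\ when $\PR$ splits completely in $k(\zeta_{|g|})$, which by Chebotarev has density $1/[k(\zeta_{|g|}):k]$. The sum-of-squares condition on $\Phi(E/k)$ generically forces the underlying rational prime to split in $\QQ(i)/\QQ$: if $\mu_4 \subset k(\zeta_{|g|})$ this is automatic, and otherwise it is an independent condition of relative density $\tfrac{1}{2}$, since $[k(\zeta_{|g|},\zeta_4):k(\zeta_{|g|})] = 2$. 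Combining local factors and comparing against a Dedekind zeta yields
\[D(s) = H(s)\,(s-1)^{-\varpi(k,G,f_{\square})}\]
near $s = 1$, with $H(s)$ holomorphic and non-vanishing there and $\varpi(k,G,f_{\square})$ precisely the stated exponent. A standard Tauberian theorem then delivers the asymptotic $c_{k,G} B(\log B)^{\varpi - 1}$, and positivity of $c_{k,G}$ follows from the explicit Euler-product description once Chebotarev is invoked once more to ensure that infinitely many admissible $G$-extensions exist.

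The principal obstacle is the Euler-product bookkeeping: the conductor exponent at $\PR$ is governed by a maximum over the restrictions of several characters, rather than a straight sum, so one must carefully disentangle contributions from distinct characters that may simultaneously ramify at a single prime. One must also isolate the tame cyclic contribution, which furnishes the poles above, from anomalous contributions at wildly ramified primes, at primes dividing $|G|$, and at rational primes above $2$, where the sum-of-squares condition degenerates. These auxiliary contributions should be absorbed into the holomorphic factor $H(s)$ by way of the general theorem on frobenian multiplicative functions developed earlier in the paper, reducing the proof of Theorem~\ref{SOTS} to verifying that $f_{\square}$ is frobenian with the correct local mean values.
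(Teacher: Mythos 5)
The central gap is your assertion that $D(s)$ ``rewrites as an Euler product $\prod_{\PR} D_{\PR}(s)$'' simply because the conductor and $f_{\square}$ are multiplicative. This is false, and it is precisely the difficulty that the paper's harmonic-analysis machinery exists to overcome. By class field theory a $G$-extension corresponds to a character $\chi\colon \AAA^{\times}/k^{\times}\to G$; the conductor and $f_{\square}$ are indeed products of local quantities, but the \emph{index set} of the sum is $\Hom(\AAA^{\times}/k^{\times},G)$, not $\prod_v \Hom(k_v^{\times},G)$. The condition that $\chi$ be trivial on the diagonally embedded $k^{\times}$ couples the local components and obstructs any naive Euler factorisation. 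The paper resolves this with the Poisson summation formula of Frei--Loughran--Newton (Lemma 3.7, equation (3.5)): summing over the closed subgroup $k^{\times}$ replaces the original sum by $\frac{1}{|\mathcal{O}_k^{\times}\otimes G^{\wedge}|}\sum_{x\in\mathcal{O}_S^{\times}\otimes G^{\wedge}}\hat f_G(x;s)$, and it is each Fourier transform $\hat f_G(x;s)$ --- not $D(s)$ itself --- that admits an Euler product (equation (3.4)). One then has a \emph{finite sum} of Euler products, each contributing a power of $(s-1)$ determined by the mean $\varpi(k,G,f,x)$ of the frobenian function $\lambda_x$, and a separate argument (Lemma 3.14) isolates the $x\in\mathcal{X}(k,G,f)$ achieving the maximal pole order. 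Your proposal never addresses the $k^{\times}$ constraint, so the step ``locate the singularity of $D(s)$ at $s=1$ by inspecting local factors'' has no valid starting point.

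There is also a circularity in your positivity argument: you propose to conclude $c_{k,G}>0$ by invoking Chebotarev ``to ensure that infinitely many admissible $G$-extensions exist.'' But the existence of even one such extension is Corollary \ref{cor 1}, which the paper explicitly derives \emph{from} positivity of the constant rather than the other way around --- this is flagged in the introduction as a feature of the method. In the paper's proof, positivity is established directly by exhibiting a sub-$G$-extension (the trivial character suffices) satisfying the local constraints at the bad places $S$, and then using character orthogonality on $\sum_{x\in\mathcal{X}(k,G,f)}\prod_{v\in S}\langle\chi_v,x_v\rangle$ to show the archimedean-and-bad-place factor is nonzero. No prior existence of a full $G$-extension is assumed. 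Your sketch would need either this orthogonality argument or the independent constructive argument via $\QQ(\zeta_p)$ with $p\equiv 1\bmod 4$ that the paper mentions in passing; as written it assumes what it is trying to prove.

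Your description of the local densities (splitting in $k(\zeta_{|g|})$, the extra factor of $\tfrac12$ when $\mu_4\not\subset k(\zeta_{|g|})$) matches the computation of $\varpi(k,G,f_{\square})$ in Proposition 4.1, and the reduction to the paper's general frobenian theorem is the right idea. But the Euler-product step and the positivity step both need to be replaced as above before the argument is sound.
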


An interesting feature of the proof is that there is no need to assume the existence of such an extension to prove that the leading constant is positive, and thus from this we deduce the following corollary.

\begin{cor} \label{cor 1}
    Let $G$ be a finite abelian group. Then there exists $E/k$ with $\Gal(E/k) \cong G$ such that $\Phi(E/k)$ is the sum of two squares.
\end{cor}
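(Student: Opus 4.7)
The plan is to deduce Corollary \ref{cor 1} directly from Theorem \ref{SOTS}, without any further work. Since the theorem provides the asymptotic $N(k,G,B) \sim c_{k,G} B (\log B)^{\varpi(k,G,f_{\square})-1}$ with $c_{k,G} > 0$, and since $B \to \infty$ while $(\log B)^{\varpi(k,G,f_{\square})-1}$ is bounded below by a positive constant for $B$ large (even if the exponent is negative, $B$ dominates), the counting function $N(k,G,B)$ tends to infinity as $B \to \infty$. In particular $N(k,G,B) \geq 1$ for all sufficiently large $B$, which is precisely the existence of an extension $E/k$ with $\Gal(E/k) \cong G$ whose conductor norm is a sum of two squares. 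This is a one-line deduction and I would not expect any subtlety in the corollary itself.

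The real content, and the hard part, is the positivity of $c_{k,G}$, which is subsumed in Theorem \ref{SOTS} and would be proved there. The remark preceding the corollary flags exactly this point: the whole benefit of the frobenian multiplicative function framework is that one obtains positivity without having to exhibit any particular extension in advance. I would expect $c_{k,G}$ to emerge as a convergent Euler product of purely local densities indexed by primes $\mathfrak{p}$ of $k$, in which each local factor is manifestly positive because at each prime one always has the freedom to leave $\mathfrak{p}$ unramified (or to choose local ramification data compatible with the sum-of-two-squares condition), so no single local factor vanishes. The key point is that the conductor, unlike the discriminant, decomposes cleanly across primes, and the sum-of-two-squares condition at a rational prime $p$ is a frobenian condition (namely $p=2$ or $p\equiv 1 \pmod{4}$) whose density of allowed primes is positive. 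These two facts together rule out any global obstruction and make the argument genuinely non-circular: existence of $E/k$ is a by-product of an analytic count, rather than an input to it.
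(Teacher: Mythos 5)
Your deduction is correct and is exactly the paper's route: given that Theorem \ref{SOTS} asserts $c_{k,G}>0$, the asymptotic forces $N(k,G,B)\to\infty$, hence at least one such extension exists. The paper's justification for the positivity of $c_{k,G,f_\square}$ (which you correctly flag as the real content, proved inside the theorem rather than the corollary) is to take the sub-$G$-extension $\psi$ in Theorem \ref{main thm} to be the trivial character; your heuristic that each local factor is nonzero because one can always leave a prime unramified captures the same idea.
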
 It is also possible to prove this result using a constructive argument, by constructing cyclotomic extensions of $\QQ$ of the form $\QQ(\zeta_p)$ for a prime $p \equiv 1 \bmod 4$. A more general version of this result, concerning the splitting of primes in finite extensions can also be obtained using the methods in this paper, as we now explain. 

\subsection{Frobenian multiplicative functions}

The main result in this paper concerns frobenian multiplicative functions, a class of multiplicative functions linked to the theory of frobenian functions described by Serre in \cite[$\S 3.3-3.4$]{serre2016lectures} and which are further explored by Loughran and Matthiesen in \cite[$\S 2$]{Loughran2019FrobenianMF} and Santens in \cite[$ \S 4.1$]{Santens_2023}. These functions encode the splitting behaviour of places of a number field $k$, and examples include the divisor function, the indicator function of the sum of two squares, and the indicator function for integers whose prime factorisation only contains primes $p \equiv a \bmod q$ for fixed integers $a$ and $q$. We recall the full definition in $\S 2$. Let $G$ be a non-trivial finite abelian group and $\mathfrak{c}(E/k)$ denote the conductor (as an ideal) of a field extension $E/k$. For a fixed frobenian multiplicative function $f$ we consider the mean value \begin{equation} \label{counting}
    \sum_{\substack{E/k \\\Phi(E/k) \leq B \\ \Gal(E/k) \cong G}}f(\mathfrak{c}(E/k)).
\end{equation} Then the main result of this paper is the following.\begin{thm}\label{short main thm}
   Let $G$ be a non-trivial finite abelian group and $f$ be a frobenian multiplicative function. Then the counting function (\ref{counting}) satisfies an asymptotic of the form \[\sum_{\substack{E/k \\\Phi(E/k) \leq B \\ \Gal(E/k) \cong G}}f(\mathfrak{c}(E/k)) = c'_{k,G,f}B(\log B)^{\varpi(k,G, f)-1}(1+o(1))\] where the exponent $\varpi(k,G, f)$ defined in $\S$\ref{exp section} depends on $f$ and $c_{k,G,f}$ is a non-negative constant, which is determined explicitly in $\S$\ref{calc leading const}.
\end{thm}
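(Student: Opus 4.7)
The plan is to translate the counting problem, via class field theory, into an analytic question about a suitable Dirichlet series, and then apply a Tauberian theorem tailored to frobenian multiplicative functions (in the spirit of Loughran--Matthiesen and Santens, cited in the introduction). First I would use class field theory to parametrize abelian extensions $E/k$ with $\Gal(E/k)\cong G$ by continuous surjective homomorphisms $\chi\colon G_k^{\mathrm{ab}}\to G$, modulo the action of $\mathrm{Aut}(G)$; under this correspondence the ideal conductor $\mathfrak{c}(E/k)$ agrees with the Artin conductor $\mathfrak{c}(\chi)$. Surjectivity would be handled by Möbius inversion over the subgroup lattice of $G$: up to a factor $|\mathrm{Aut}(G)|^{-1}$, the quantity we wish to estimate becomes a signed combination of sums
\[S_H(B)\;=\;\sum_{\substack{\chi\colon G_k^{\mathrm{ab}}\to H\\ N\mathfrak{c}(\chi)\le B}} f(\mathfrak{c}(\chi))\]
indexed by subgroups $H\le G$.

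Next I would package each $S_H(B)$ into the Dirichlet series $Z_H(s)=\sum_\chi f(\mathfrak{c}(\chi))\,N\mathfrak{c}(\chi)^{-s}$. Because $f$ is multiplicative and the Artin conductor is a local invariant, $Z_H(s)$ admits an Euler product $\prod_\mathfrak{p} Z_{H,\mathfrak{p}}(s)$, where each local factor is a finite sum over local characters $\chi_\mathfrak{p}\colon G_{k_\mathfrak{p}}^{\mathrm{ab}}\to H$. At a tame prime $\mathfrak{p}\nmid|G|$ the characters of conductor exactly $\mathfrak{p}$ factor through $(\mathcal{O}_{k_\mathfrak{p}}/\mathfrak{p})^\times$, and therefore correspond to elements $h\in H$ with $|h|\mid N\mathfrak{p}-1$, equivalently primes splitting completely in $k(\zeta_{|h|})/k$; the local factor expands as
\[Z_{H,\mathfrak{p}}(s)=1+\frac{f(\mathfrak{p})}{N\mathfrak{p}^s}\cdot\#\{h\in H\setminus\{e\}:\mathfrak{p}\text{ splits in }k(\zeta_{|h|})/k\}+O\!\left(N\mathfrak{p}^{-2s}\right).\]
Comparing $Z_H(s)$ to an appropriate product of Hecke $L$-functions via Chebotarev and the frobenian structure of $f$ shows that $Z_H(s)$ has a logarithmic singularity at $s=1$ of order
\[\alpha_H\;=\;\sum_{h\in H\setminus\{e\}}\delta_f\bigl(\mathfrak{p}\text{ splits in }k(\zeta_{|h|})/k\bigr),\]
where $\delta_f(\,\cdot\,)$ denotes the density associated to $f$ restricted to the stated Chebotarev condition. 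The dominant contribution comes from $H=G$, and after Möbius inversion over subgroups a Landau--Selberg--Delange-type theorem for frobenian multiplicative functions yields the asymptotic of the theorem with $\varpi(k,G,f)=\alpha_G$ and an explicit leading constant.

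The hardest steps will be: (i) identifying the correct exponent $\varpi(k,G,f)$, which requires keeping track of how the frobenian density of $f$ interacts with the cyclotomic Chebotarev densities, and how Möbius inversion over the subgroup lattice recombines the contributions so that only surjective characters remain; (ii) uniform control of the local Euler factors at the wildly ramified primes $\mathfrak{p}\mid|G|$, which are bounded but must not disturb the analytic behaviour at $s=1$; and (iii) non-negativity of the leading constant $c'_{k,G,f}$, which is not manifest because of the signs coming from Möbius inversion. One expects to rewrite the constant as a convergent product of locally non-negative densities, and this rewriting should be precisely what later yields positivity in the sum-of-two-squares case and hence Corollary~\ref{cor 1}.
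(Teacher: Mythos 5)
Your overall outline — translate to homomorphisms via class field theory, remove the surjectivity condition by summing over subgroups $H\le G$, reduce to an Euler-product analysis, locate a pole of order $\varpi(k,G,f)$ at $s=1$ using Chebotarev-type densities, and finish with Selberg--Delange — is the right shape, and your proposed formula for the exponent, $\varpi(k,G,f)=\sum_{h\in G\setminus\{e\}}\delta_f\bigl(\mathfrak{p}\text{ splits in }k(\zeta_{|h|})/k\bigr)$, does agree with the mean of the frobenian function $\lambda$ in the paper. But the central analytic step as you have written it is not correct, and this is precisely the place where the paper's harmonic-analysis machinery cannot be dispensed with.

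The gap is your claim that $Z_H(s)=\sum_{\chi\colon G_k^{\mathrm{ab}}\to H} f(\mathfrak{c}(\chi))N\mathfrak{c}(\chi)^{-s}$ ``admits an Euler product $\prod_\mathfrak{p}Z_{H,\mathfrak{p}}(s)$.'' A global character $\chi\in\Hom(\AAA^\times/k^\times,H)$ has well-defined local components $\chi_v\in\Hom(k_v^\times,H)$, but the converse fails: an arbitrary compatible family $(\chi_v)_v$ need not descend to $\AAA^\times/k^\times$ — it must additionally be trivial on the diagonally embedded $k^\times$. Even over $\QQ$ this is a real constraint (coming from the units $\pm1$), and over a general $k$ both $\mathcal{O}_k^\times$ and the class group obstruct the factorisation. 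So the product $\prod_\mathfrak{p}Z_{H,\mathfrak{p}}(s)$ is the Dirichlet series attached to $\Hom(\AAA^\times,H)$, not to $\Hom(\AAA^\times/k^\times,H)$; it overcounts. (It also diverges as written unless you normalise the local sums by a factor of $|H|^{-1}$, since each ramified local character can be twisted by any unramified one — but that is a fixable bookkeeping issue.) The paper resolves the descent problem with the Poisson summation formula of Frei--Loughran--Newton: the global Dirichlet series equals $\tfrac{1}{|\mathcal{O}_k^\times\otimes H^\wedge|}\sum_{x\in\mathcal{O}_S^\times\otimes H^\wedge}\hat f_H(x;s)$, a \emph{finite sum} of genuine Euler products $\hat f_H(x;s)=\prod_v\hat f_{v,H}(x_v;s)$, one for each twist $x$. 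Your single Euler product is the $x=1$ term.

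This is not merely a missing constant. The set $\mathcal{X}(k,G,f)\subset\mathcal{O}_S^\times\otimes G^\wedge$ of twists $x$ for which $\hat f_G(x;s)$ has the \emph{same} pole order $\varpi(k,G,f)$ as the $x=1$ term can be nontrivial, so several Euler products contribute to the leading singularity and their interference is exactly what makes both the value of the constant and its non-negativity/positivity delicate (your point (iii)). With only the $x=1$ Euler product you would get a wrong leading constant and, worse, would have no rigorous way to pass from the Euler product to the sum over global characters. Once the Poisson summation step is in place, the rest of your programme — bounding the local factors at places $v\in S$ (your point (ii)), identifying the exponent via frobenian means, and showing that only $H=G$ contributes to leading order — goes through essentially as you sketch; the paper does indeed then apply a Selberg--D\'elange tauberian theorem (\cite[Thm.~5.2]{Tenenbaum1995IntroductionTA}) to each $\hat f_G(x;s)$ and sums the results.
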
 It is clear that Theorem $1.1$ is a special case of this theorem, obtained by setting the frobenian multiplicative function $f$ in the theorem to be the indicator function of the sums of two squares. We will prove this result by using a Poisson summation formula derived using harmonic analysis, a method developed by Frei, Loughran and Newton in \cite{HNPabelianext} and \cite{Frei_2022}. It is possible to deduce a version of Theorem \ref{SOTS}
from the work of Alberts and O'Dorney \cite{brandonevan} (with its Corrigendum \cite{corrigendum}) and that of Alberts \cite{Alberts2021HarmonicAA}, however their result does not given an explicit expression for the leading constant. We are able to obtain this formula in Section \ref{calc leading const}, and such an explicit formula is important in applications. 

Contrary to Theorem $1.1$, there is no reason why the leading constant in Theorem $1.3$ should be positive in general. Positivity can fail for multiple reasons. For example, the set of all cyclic cubic extensions of $\QQ$ such that every prime $p$ ramifying in the extension satisfies $ p \equiv 2 \bmod 3$ is empty. This can be shown by considering the frobenian multiplicative function given by the indicator function for those numbers whose prime factors are all congruent to $2 \bmod 3$, and showing that the leading constant is zero in this case. This is an example of a frobenian behaviour preventing the existence of such extensions. There is also the famous counter-example of Grunwald and Wang, which asserts that there are no $\ZZ/8\ZZ$ extensions of $\QQ$ which are totally inert at two, which shows that pathological behaviour at a single prime can also obstruct the existence of certain field extensions. These examples also tell us that it is not in general possible to find a simple expression for the exponent $\varpi(k,G,f)$ in the asymptotic formula.

\subsection{The classifying stack $BG$}

Over quadratic number fields, it is possible to interpret the condition of the discriminant being the sum of two squares via the specialisation of elements in the Brauer group of the classifying stack $BG$. This is related to Serre's work on the specialisation of Brauer group elements in \cite{Serre2000SpcialisationD}, but working over stacks rather than varieties. In the case of a quadratic field this recovers the result on the conductor, as the discriminant of a quadratic field extension is equal to its conductor. This will be discussed in more detail in $\S 5$.

\subsection{Methodology and structure}

We begin by introducing properties of frob\-enian multiplicative functions as described by Loughran and Matthiesen in \cite[$\S 4.1$]{Loughran2019FrobenianMF} and state a result on the zeta function of a frobenian function that we will require later in the paper. In $\S 3$ we prove the main result about the asymptotic for abelian number fields with frobenian conditions imposed on the conductor by using the harmonic analysis techniques introduced by Frei, Loughran and Newton in \cite[$\S 3.4$]{Frei_2022}. Moreover, we will find an explicit expression for the leading constant in terms of sums of Euler products and use this to analyse when it can be positive. In $\S 4$ we prove Theorem \ref{SOTS} and its corollary by applying the main result from $\S 3$ to the indicator function of the sums of two squares, as well as proving the explicit formula for the exponent $\varpi(k,G,f_{\square})$ in this case. We also generalise this result to counting completely split primes in a field extension $L/k$. Finally in $\S 5$, we motivate some of the theory about counting conductors that are the sums of two squares via the theory of stacks. We show that statements on the discriminant of number fields can be recovered by proving results on the specialisation of certain Brauer group elements in the Brauer groups of the stack $BG$ for a finite constant group scheme $G$.

\subsection{Notation}

\begin{itemize}
    \item $k$ is a number field
    \item $\Omega_k$ the set of places of $k$
    \item $v$ a place of $k$ 
    \item $\mathcal{O}_k$ the ring of integers of $k$
    \item $k_v$ the completion of $k$ at $v$ 
    \item $\mathcal{O}_v$ the ring of integers of $k_v$. When $v|\infty$, we use the convention $\mathcal{O}_v = k_v $
    \item $q_v$ is the cardinality of the residue field
    \item $\AAA^{\times}$ are the ideles of $k$
    \item $I_k$ the fractional ideals of $k$
    \item $\mathfrak{p}_v$ maximal ideal of $\mathcal{O}_v$
    \item $\mathfrak{c}(E/k)$ is the conductor as an ideal of $E/k$
    \item $\Phi(E/k)$ is the norm of the conductor $\mathfrak{c}(E/k)$
\end{itemize}
If $v$ is a finite place of $k$, we write $\mathfrak{p}_v$ for the corresponding prime ideal. We will say that an ideal $ \mathfrak{a} \subset \mathcal{O}_k$ is the sum of two squares if its norm $N(\mathfrak{a})$ is the sum of two squares. In particular, we say the conductor $\mathfrak{c}(E/k)$ of a number field $k$ is the sum of two squares if its norm $\Phi(E/k)$ is the sum of two squares. Furthermore, we will make use of the following definition.

\begin{defi}[$G$-extensions of $k$]
    Let $G$ be a finite abelian group, $F$ be a field and fix a choice of separable closure $\overline{F}$ of $F$. A \textit{$G$-extension} of $F$ is a surjective continuous homomorphism $\Gal(\overline{F}/F) \rightarrow G$. A \textit{sub-$G$-extension} of $F$ is a continuous homomorphism $\Gal(\overline{F}/F) \rightarrow G$. In other words, it is a pair $(L/F, \psi)$, where $\psi: \Gal(L/F) \rightarrow G$ is an injective homomorphism and $L/F$ is some field extension of $F$ contained inside $\overline{F}$.
\end{defi}

Choosing a $G$-extension of $k$ is equivalent up to multiplication by the factor $|\text{Aut}(G)|$ to choosing a field extension $k \subset E \subset \overline{k}$ and an isomorphism $\Gal(E/k) \xrightarrow{\sim} G$. Forgetting the choice of isomorphism just means that the counting results need to be multiplied by $|\text{Aut}(G)|$.  Let $G\ext(k)$ denote the set of all $G$-extensions of $k$, and if $\varphi$ is a $G$-extension of $k$, then $E_{\varphi}$ will denote the corresponding number field, $\mathfrak{c}(\varphi)$ the conductor of the corresponding number field and $\Phi(\varphi)$ the norm of the conductor. For every place $v$ of $k$ we fix a separable closure $\overline{k}_v$, and embeddings $k \hookrightarrow \overline{k} \hookrightarrow \overline{k}_v$ and $k \hookrightarrow k_v \hookrightarrow \overline{k}_v$ that are compatible with each other. In this way a sub-$G$-extension $\varphi$ of $k$ induces a sub-$G$-extension $\varphi_v$ of $k_v$ at every place of $k$. Thus we have reduced the problem of counting number fields to one of counting surjective homomorphisms $\varphi: \Gal(\overline{k}/k) \rightarrow G$. From now on we will use the viewpoint of $G$-extensions of $k$ rather than that of abelian field extensions $E/k$, and we define the following counting function  \[N(k,G,B,f) = \sum_{\substack{\Phi(\varphi) \leq B \\ \varphi \in G-\text{ext}(k)}}f(\mathfrak{c}(\varphi)).\]

\subsection{Acknowledgements}

I would like to thank Daniel Loughran for his guidance and many useful discussions, as well as feedback on drafts of this paper. I would also like to thank Brandon Alberts for his helpful comments. I am supported by the EPRSC Postdoctoral Studenship.

\section{Frobenian multiplicative functions}

We use the theory of frobenian multiplicative functions. If $k$ is a number field, we will write $\mathfrak{p}$ for the primes of $k$, which are prime ideals of $\mathcal{O}_k$. We use the following definition for frobenian multiplicative functions from \cite[Def. $4.1$]{Santens_2023}.

\begin{defi}[Frobenian multiplicative functions]\label{frob funct}
    Let $S$ be a finite set of places of a number field $k$. A multiplicative function $f: I_k \rightarrow \CC$ is an \emph{$S$-frobenian multiplicative function} if it satisfies the following:
    \begin{itemize}
        \item There exists some constant $H \in \NN$ such that $|f(\mathfrak{p}^n)| \leq H^n$ for all primes $\mathfrak{p}$ and $n \in \NN$.
        \item For all $\epsilon > 0 $ there exists a constant $C_{\epsilon}$ such that $|f(\mathfrak{n})| \leq C_{\epsilon}N(\mathfrak{n})^{\epsilon}$.
        \item The restriction of $f$ to the set of places of $k$ is $S$-frobenian. This means that there exists some finite extension $K/k$ with Galois group $\Gamma = \Gal(K/k)$ such that $S$ contains all places that ramify in $K/k$ and a class function $\varphi : \Gamma \rightarrow \CC$ satisfying \[\varphi(\Frob_{v}) = f(\PR_v)\] for all $v \not\in S$.
    \end{itemize}
\end{defi} 

\begin{rem}
    If the frobenian multiplicative function $f$ satisfies that $f(\mathfrak{p}^n) \leq H$ for all primes $\PR$ and $n \in \NN$, the second bullet point in Definition \ref{frob funct} follows from the first.
\end{rem}

Note that a class function is a function that is constant on conjugacy classes. In particular, this means that $\varphi(\Frob_v)$ is well defined for $v \not\in S$, so we adopt an abuse of notation by letting $\Frob_v$ be a choice of element in the Frobenius conjugacy class. The mean, $m(f)$, of a frobenian multiplicative function is defined to be the mean of the associated frobenian function, given by \[m(f) = \frac{1}{|\Gamma|} \sum_{\sigma \in \Gamma}\varphi(\sigma)\] where $\varphi$ is the class function associated to $f$ and $\Gamma$ the Galois group on which $\varphi$ is defined. The mean of a frobenian multiplicative function can equivalently be described as an integral over the absolute Galois group $\Gamma_k = \Gal(\overline{k}/k)$,
\begin{equation}
m(f) = \int_{\gamma \in \Gamma_k}\varphi(\gamma) d\gamma. 
\end{equation}
Working over the absolute Galois group instead of the Galois group associated to a specific frobenian function makes it easier to relate two frobenian functions, and in particular to take the mean of their sum and products. If $\varphi: G \rightarrow \CC$ is the class function corresponding to $f$, then we can view $G$ as the quotient $\Gamma_k/N$ of the absolute Galois group by a normal subgroup $N$, so that $\varphi$ is a $N$-invariant function on $\Gamma_k$. To take the integral we equip $\Gamma_k$ and $N$ with their respective Haar probability measures, which then gives a Haar measure on $G$. We also have the following: \begin{defi}[$L$-Function of a frobenian multiplicative function]
    Let $f$ be a frobenian multiplicative function. Then its $L$-function is defined to be the Dirichlet series \[L(f,s) = \sum_{\mathfrak{n}}\frac{f(\mathfrak{n})}{N(\mathfrak{n})^s} = \prod_{\mathfrak{p}} \left( 1+ \sum_{j \geq 1} \frac{f(\mathfrak{p}^j)}{N(\mathfrak{p})^{js}}\right) \] for $\re(s) > 1$, and it converges absolutely in this region as $|f(\mathfrak{n})| \leq C_{\epsilon}N(\mathfrak{n})^{\epsilon}$.
\end{defi} We say that a set is \emph{frobenian} if its indicator function is frobenian. A function is \emph{frobenian} if it is $S$-frobenian for some set $S$ of places of $k$. 

\begin{exa}[Examples of frobenian multiplicative functions]

The following are examples of frobenian multiplicative functions when $k = \QQ$:
    \begin{enumerate}
        \item The divisor function, which is the arithmetic function that counts the number of divisors of an integer.
        \item The indicator function for the sums of two squares. 
        \item An example due to Serre \cite[$\S 3.4.3$]{serre2016lectures} is the function given by the mapping $p \mapsto a_p \bmod mA$, where $p$ is a prime and $a_p$ is the $p$th Fourier coefficient of a modular form on $\Gamma_0(N)$ for some fixed level $N$ and weight $k>0$ (in order to make sure this takes values in $\CC$, it is necessary to embed $\ZZ/(mA)\ZZ$ into $\CC^{\times}$ via the mapping $x \mapsto e^{2\pi i x}$). Then this is an $S_{mN}$ frobenian function where $S_{mN}$ contains the primes dividing $mN$.
    \end{enumerate}
\end{exa}

We will need the following result on frobenian functions stated and proved in \cite[Prop. $2.3$]{Frei_2022}. It uses the Dedekind zeta function for a number field $k$, which is the function given by \[\zeta_k(s) = \sum_{\substack{\mathfrak{a} \subset \mathcal{O}_k \\ \mathfrak{a} \neq 0}}\frac{1}{N(\mathfrak{a})^s}.\] Here $N(\mathfrak{a})$ is the norm of $\mathfrak{a}$, which is given by the number of elements of the (finite when $\mathfrak{a} \neq 0$) ring $\mathcal{O}_k/\mathfrak{a}$. If $v$ is a non-archimedean place of $k$ then $\zeta_{k,v}$ is the Euler factor of $\zeta_k$ at $v$ and if $v$ is an archimedean place then $\zeta_{k,v} = 1$. 

\begin{prop} \label{prop 2.3}
    Let $k$ be a number field, $S$ a finite set of places of $k$ containing the archimedean places and $\lambda$ be an $S$-frobenian function. Assume that for all $v \not\in S$, $\lambda$ satisfies that $|\lambda_x(v)| < q_v$. Then the Euler product given by \[F(s) = \prod_{v \not\in S} \left(1+\lambda_x(v)q_v^{-s}\right)\] can be written as \[F(s) = \zeta_k(s)^{m(\lambda)}G(x;s)\] for some $G(x;s)$ holomorphic in the region 
    \begin{equation} \label{reg}
        \re(s) > 1 - \frac{c}{\log(|\im(s)|+3)}
    \end{equation}
    
    for some $c<\frac{1}{4}$, and in this region satisfies the bound 
    \begin{equation} \label{bound}
        G(x;s) \ll (|\im(s)|+1)^{\frac{1}{2}}.
    \end{equation} Furthermore,  \[\lim_{s \rightarrow 1}(s-1)^{m(\lambda)}F(s) = (\Res_{s=1} \zeta_k (s))^{m(\lambda)} \prod_{v \not\in S}\frac{1+\lambda_x(v)q_v^{-1}}{\zeta_{k,v}(1)^{m(\lambda)}} \prod_{v \in S}\frac{1}{\zeta_{k,v}(1)^{m(\lambda)}}.\] 
\end{prop}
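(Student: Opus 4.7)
The plan is to compare $F(s)$ with a suitable product of Artin $L$-functions by decomposing the class function $\varphi$ attached to $\lambda$ into irreducible characters of $\Gamma = \Gal(K/k)$. Writing $\varphi = m(\lambda) \mathbf{1} + \sum_{\chi \neq \mathbf{1}} a_\chi \chi$, where $\chi$ runs over the non-trivial irreducible characters of $\Gamma$ and $a_\chi = \langle \varphi, \chi \rangle$, I would take the logarithm of the Euler product $F(s) = \prod_{v \notin S}(1 + \lambda_x(v) q_v^{-s})$, expand via $\log(1+z) = z + O(z^2)$, and use that $\lambda_x$ is uniformly bounded (by the first bullet of Definition \ref{frob funct}) to absorb all quadratic and higher-order terms in $q_v^{-s}$ into a function $H_0(x;s)$ that is holomorphic on $\re(s) > 1/2$. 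This yields
$$\log F(s) = m(\lambda) \log \zeta_k(s) + \sum_{\chi \neq \mathbf{1}} a_\chi \log L(s, \chi) + H_0(x;s),$$
and exponentiating gives $F(s) = \zeta_k(s)^{m(\lambda)} G(x;s)$ with $G(x;s) = \exp(H_0(x;s)) \prod_{\chi \neq \mathbf{1}} L(s, \chi)^{a_\chi}$, reducing the analysis to a product of non-trivial Artin $L$-functions of $K/k$.

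To extend $G$ holomorphically to the region (\ref{reg}), I would invoke the classical zero-free region for non-trivial Artin $L$-functions: for one-dimensional $\chi$ this is the standard zero-free region for Hecke $L$-functions, and the higher-dimensional case reduces to the abelian case via Brauer's induction theorem, writing $L(s, \chi)$ as a $\ZZ$-linear product of Hecke $L$-functions of subfields of $K/k$. The resulting zero-free strip has the stated shape with any $c < 1/4$, and in this strip the complex power $L(s, \chi)^{a_\chi}$ is defined by analytic continuation of a branch of $\log L(s, \chi)$ from a point of absolute convergence. The bound $G(x;s) \ll (|\im(s)| + 1)^{1/2}$ then follows from the convexity bound for each $L(s, \chi)$ on this strip, combining polynomial growth on $\re(s) = 1 + \epsilon$ with the functional equation via the Phragm\'en--Lindel\"of principle.

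For the residue formula at $s = 1$, I would verify the Euler product identity
$$G(x;s) = \prod_{v \notin S} \frac{1 + \lambda_x(v) q_v^{-s}}{\zeta_{k,v}(s)^{m(\lambda)}} \cdot \prod_{v \in S} \zeta_{k,v}(s)^{-m(\lambda)}$$
as meromorphic functions on $\re(s) > 1$, interpreting the first product through the Artin $L$-function factorisation above so that its value at $s = 1$ is inherited from the non-vanishing of $L(1, \chi)$ for non-trivial $\chi$. Multiplying the identity $F(s) = \zeta_k(s)^{m(\lambda)} G(x;s)$ by $(s-1)^{m(\lambda)}$ and letting $s \to 1$ then yields the stated formula, using $\lim_{s \to 1}(s-1)\zeta_k(s) = \Res_{s=1}\zeta_k(s)$ for the first factor and continuity of $G(x;s)$ at $s = 1$ for the second.

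The main obstacle I foresee is ensuring that the zero-free region and the bound (\ref{bound}) are uniform in the parameter $x$, i.e.\ that the constant $c$ and the implied constant depend only on $K/k$ and on a uniform bound for the coefficients $a_\chi(x)$ and not on $x$ itself. Since $\Gamma$ and its set of irreducible characters are fixed independently of $x$, this reduces to classical estimates for a finite family of Artin $L$-functions with bounded coefficients, but some bookkeeping is needed to track the dependence on $x$ explicitly through the residue formula.
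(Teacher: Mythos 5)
The paper does not prove this proposition; it simply cites \cite[Prop.~2.3]{Frei_2022}, and your proposed proof reconstructs essentially the argument used there: decompose the class function into irreducible characters of $\Gal(K/k)$, match the Euler product against $\zeta_k(s)^{m(\lambda)}$ times a product of non-trivial Artin $L$-functions (to fractional/complex powers), invoke the zero-free region (reducing to Hecke $L$-functions via Brauer induction) and a Phragm\'en--Lindel\"of bound, and then read off the residue from the local Euler factor comparison. The one point worth making explicit in the write-up is the uniformity you flag at the end: since the field extension defining $\lambda_x$ (and hence $\Gamma$ and its finite character set) is independent of $x$ and there are only finitely many $x$ under consideration later in the paper, the implied constants and the constant $c$ in the zero-free region can indeed be taken uniformly, so this is a matter of bookkeeping rather than a gap.
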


\begin{proof}
    See \cite[Prop. $2.3$]{Frei_2022}.
\end{proof}

\section{Counting with frobenian multiplicative functions}

Let $f$ be a frobenian multiplicative function, $G$ a finite abelian group, and $\varphi$ be a $G$-extension of $k$. We would like to find an asymptotic formula for the counting function  \[N(k,G,B,f) = \sum_{ \substack{\Phi(\varphi) \leq B\\ \varphi \in G-\text{ext}(k)}}f(\cc(\varphi)),\]which counts the number of $G$-extensions of $k$ such that the norm of its conductor is bounded and whose conductor satisfies some frobenian condition.

\subsection{Dirichlet series, class field theory}

We consider the following Dirichlet series \[D_f(s) = \sum_{\varphi \in G - \ext(k)} \frac{f(\mathfrak{c}(\varphi))}{\Phi(\varphi)^s}\] where $f$ is a frobenian multiplicative function. This is holomorphic on $\re(s) > 1$ since by the definition of a frobenian multiplicative function, for all $\epsilon>0$ there exists some constant $C_{\epsilon}$ such that $|f(\mathfrak{n})| \leq C_{\epsilon} N(\mathfrak{n})^{\epsilon}$. When $f$ is the constant function $f= 1$ then this follows from \cite[Lem.~$2.10$]{Wood_2010}. To simplify notation, we will from now on write $f(\varphi)$ in place of $f(\mathfrak{c}(\varphi))$, with the understanding that $f$ is defined on the conductor of the number field corresponding to $\varphi$. In order to make the counting easier, we wish to remove the surjectivity condition, so that instead of summing over $G$-extensions of $k$ we sum over continuous homomorphisms instead.

Let $\Gamma_k$ denote the absolute Galois group $\Gal(\overline{k}/k)$. Notice that \begin{align*}
    \# \{ \varphi: \Gamma_k \rightarrow G: \Phi(\varphi) \leq B, \: \varphi \: \text{surj.}\} = & \: \# \{ \varphi: \Gamma_k \rightarrow G: \Phi(\varphi) \leq B\} \\
    & + O \left( \sum_{\substack{H \subset G \\ H \neq G}} \# \{\varphi: \Gamma_k \rightarrow H :\Phi(\varphi) \leq B \} \right).
\end{align*} Using this we will analyse the series \[\sum_{\varphi \in \Hom(\Gamma_k, H)} \frac{f(\varphi)}{\Phi(\varphi)^s}\] for an arbitrary subgroup $H$ of $G$ and apply the asymptotic to all $H \subset G$. We will eventually show that the only subgroup that contributes to the leading term is $G$ itself, as we are ordering by conductor, as done in \cite{Wood_2010}. Let $S$ be a finite set of places that contains those places which ramify in the field extension defining $f$, all places $v$ lying above primes $p \leq |G|$, all the archimedean places of $k$, and such that \[\mathcal{O}_S = \{x \in k : v(x) \geq 0 \: \forall \: v \not\in S\}\] has trivial class group. The global Artin map $\AAA^{\times}/k^{\times} \rightarrow \Gal(k^{\text{ab}}/k)$ where $k^{\text{ab}}/k$ is the maximal abelian extension of $k$ induces a canonical identification between the groups \[\Hom(\Gamma_k,H) = \Hom(\mathbb{A}^{\times}/k^{\times},H).\] Moreover the local Artin map $k_v^{\times} \rightarrow \Gal(k_v^{\text{ab}}/k_v)$ induces a canonical identification between \[\Hom(\Gamma_{k_v},H) = \Hom(k_v^{\times},H).\]In light of the identifications above we can now view $f$ as a function defined on $\Hom(\mathbb{A}^{\times}/k^{\times},H)$ instead. Every sub-$G$-extension $\varphi \in \Hom(\Gal(\overline{k}/k),H)$ then corresponds to a continuous homomorphism $\chi \in \Hom(\mathbb{A}^{\times}/k^{\times},H)$. From now on, by an abuse in notation we shall refer to such a $\chi$ as a character. Furthermore, for such a character $\chi \in \Hom(\AAA^{\times}/k^{\times}, H)$ we have that $\Phi(\chi)$ is the reciprocal of the idelic norm of the conductor of $\ker(\chi)$. This corresponds to $\Phi(\varphi)$ for the associated sub-$G$-extension of $k$. We can now consider the series \[D_f(s) =  \sum_{\chi \in \Hom(\AAA^{\times}/k^{\times},H)} \frac{f(\chi)}{\Phi(\chi)^s}\] defined over the ideles, and $f$ is defined on the conductor of $\ker(\chi)$. This puts us in the correct setting for harmonic analysis.

\subsection{Harmonic analysis}

Since $\AAA^{\times}/k^{\times}$ and $k_v^{\times}$ are locally compact abelian groups and $H$ is finite, the groups $\Hom(\mathbb{A}^{\times}/k^{\times},H)$ and $\Hom(k_v^{\times},H)$ are also both locally compact abelian groups \cite[Page~$377$.~Cor.]{Moskowitz1967HomologicalAI}. By \cite[Lem.~$3.2$]{HNPabelianext} their Pontryagin duals can canonically be identified with $\AAA^{\times}/k^{\times} \otimes H^{\wedge}$ and $k_v^{\times} \otimes H^{\wedge}$ respectively, and they have associated pairings $\langle \cdot,\cdot \rangle: \Hom(\mathbb{A}^{\times}/k^{\times},H) \times (\AAA/k^{\times} \otimes H^{\wedge}) \rightarrow S^{1}$ and $\langle \cdot,\cdot \rangle:\Hom(k_v^{\times},H) \times( k_v^{\times} \otimes H^{\wedge} ) \rightarrow S^{1}$. Any locally compact abelian group comes with a Haar measure, which is unique up to scalar multiplication by a positive factor. In particular $\Hom(k_v^{\times},H)$ can be equipped with a unique Haar $d\chi_v$ measure satisfying \[\text{vol}(\Hom(k_v^{\times}/\mathcal{O}_v^{\times}, H)) = 1. \] We choose our measure to be such that for non-archimedean $v$ it is $|H|^{-1}$ times the counting measure and for archimedean places it is the counting measure (using the convention that for archimedean places $\mathcal{O}_v = k_v$). We can take the product of the local measures to obtain a well-defined global measure $d\chi$ on $\Hom(\AAA^{\times},H)$. For $\chi \in \Hom(\AAA^{\times},H)$ then we define  $\chi_v \in \Hom(k_v^{\times},H)$ to be the restriction of $\chi$ to $k_v^{\times}$ for each place $v$. Furthermore, we have the following definition.

\begin{defi} \label{rami}
    Let $\chi_v \in \Hom(k_v^{\times},H)$ and $\Phi_v(\chi_v)$ be the reciprocal of the $v$-adic norm of the conductor of $\ker(\chi_v)$, where the conductor of $\ker(\chi_v)$ is $\mathfrak{p}_v^n = \pi_v^n\mathcal{O}_v$, where $n$ is the smallest integer such that $1+ \mathfrak{p}_v^n \subseteq \ker(\chi_v)$. \\
     We say $\chi_v$ is \textit{unramified} if it is trivial on $\mathcal{O}_v^{\times}$, that is, $\chi_v \in \Hom(k_v^{\times}/\mathcal{O}_v^{\times},H)$. We say $\chi_v$ is \textit{tamely ramified} at $v$ if $\gcd(q_v,|G|) = 1$. In the case that $\chi_v$ is unramified we have $\Phi_v(\chi_v) = 1$ and when the ramification is tame we have $\Phi_v(\chi_v) = q_v$.
\end{defi} It is clear by our assumptions on $S$ that the ramification is tame at all $v \not\in S$. Let $f_v$ be the local function given by restricting $f$ to $\Hom(k_v^{\times}, H)$ for each place $v$,  defined on the conductor of $\ker(\chi_v)$. Due to the compatibility between local and global class field theory we have the relation \[ f = \prod_{v} f_{v}\] and in particular \[f/\Phi^s = \prod_v f_{v}/\Phi_v^s.\] For $v \not\in S$, $\Phi_v$ takes the value $1$ on unramified elements, and since $f_{v}$ is multiplicative, it is also $1$ at these elements. Therefore $f/\Phi^s$ extends to a well defined, continuous function on $\Hom(\AAA^{\times}, H)$ as $f_v/\Phi_v^s$ takes value $1$ on the unramified elements. Using this and the above measure we can now define the Fourier transform of the global function $f/\Phi^s$ and the local functions $f_{v}/\Phi_v^s$ by \[\hat{f}_{H}(x;s) = \int_{\chi \in \Hom(\AAA^{\times},H)} \frac{f(\chi)\langle \chi, x \rangle}{\Phi(\chi)^s} d\chi\] for $x \in \AAA^{\times} \otimes H^{\wedge}$ and \[\hat{f}_{v,H}(x_v;s) = \int_{\chi_v \in \Hom(k_v^{\times},H)} \frac{f_{v}(\chi_v)\langle \chi_v, x_v \rangle}{\Phi_v(\chi_v)^s} d\chi_v\] for $x_v \in k_v^{\times} \otimes H^{\wedge}$. The global Fourier transform  exists for $\re(s) \gg 1$ and has an Euler product decomposition \begin{equation} \label{euler prod}
    \hat{f}_{ H}(x;s) = \prod_v \hat{f}_{v, H}(x_v;s).\end{equation}

\begin{lem} \label{loc fou prop}
    The local Fourier transforms $\hat{f}_{v,H}(x_v;s)$ satisfy $\hat{f}_{v,H}(x_v;s) \ll_{H} 1$ for $\re(s) \geq 0$ and when $f_v$ is real valued and non-negative, we have $\hat{f}_{v,H}(1;s)> 0 $ for $s \in \RR$.
\end{lem}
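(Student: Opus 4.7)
The plan is to realise the integral defining $\hat f_{v,H}(x_v;s)$ as a finite sum and then bound it by stratifying the characters according to their conductor exponent. Every $\chi_v \in \Hom(k_v^\times, H)$ takes values in a group of exponent dividing $|H|$, so it factors through the finite quotient $k_v^\times/(k_v^\times)^{|H|}$; in particular $\Hom(k_v^\times, H)$ is finite. With the chosen normalisation one has
$$\hat f_{v,H}(x_v;s) = |H|^{-1}\!\!\sum_{\chi_v \in \Hom(k_v^\times, H)}\!\! f_v(\chi_v)\,\langle \chi_v, x_v\rangle\,\Phi_v(\chi_v)^{-s}$$
at non-archimedean $v$, with the prefactor replaced by $1$ at archimedean places.

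For the upper bound, since $|\langle \chi_v, x_v\rangle| = 1$ and $|\Phi_v(\chi_v)^{-s}| \leq 1$ for $\re(s) \geq 0$ (as $\Phi_v(\chi_v) \geq 1$), it suffices to bound $\sum_{\chi_v}|f_v(\chi_v)|$ uniformly in $v$. Let $S_0$ be the finite set consisting of the archimedean places, the places dividing $|H|$, and those where the frobenian data for $f$ ramify. For $v \notin S_0$, tameness forces $\chi_v$ to be trivial on the pro-$p$ group $1+\mathfrak{p}_v$, so $\chi_v$ factors through $k_v^\times/(1+\mathfrak{p}_v) \cong \ZZ \times \FF_{q_v}^\times$; this gives at most $|H|$ unramified characters (each with $f_v(\chi_v) = 1$ and $\Phi_v = 1$) and at most $|H|(|H|-1)$ tamely ramified ones (each with $\Phi_v = q_v$ and $|f_v(\chi_v)| = |f(\mathfrak{p}_v)|$ bounded by the supremum of the class function associated to $f$, uniformly in $v$). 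For $v \in S_0$, a finite set of places, the sum is a finite sum of bounded values; taking the maximum over $S_0$ yields a constant depending only on $H$ together with the fixed data $k$ and $f$.

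For the positivity statement, setting $x_v = 1$ gives $\langle \chi_v, 1\rangle = 1$ for every $\chi_v$. When $f_v$ is real and non-negative and $s \in \RR$, every summand $f_v(\chi_v)\Phi_v(\chi_v)^{-s}$ is non-negative, and the trivial character contributes $|H|^{-1}$ at non-archimedean $v$ (or $1$ at archimedean $v$), which is strictly positive. Hence $\hat f_{v,H}(1;s) > 0$. The main obstacle is obtaining a bound uniform across the infinite family of tame places; this is resolved by the observation that tameness caps the conductor exponent at $1$, so both the count of contributing characters and the values $|f_v(\chi_v)|$ are controlled in terms of $H$ and the frobenian structure of $f$ alone, independently of $v$.
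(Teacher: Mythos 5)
Your proof is correct and follows essentially the same approach as the paper: write the local Fourier transform as a finite sum, bound each term using $\Phi_v \geq 1$ and the frobenian bound on $|f_v|$, and observe that the trivial character forces the sum over non-negative terms to be strictly positive when $x_v=1$. Your treatment of the uniformity in $v$ — separating the finitely many ``bad'' places from the tame ones, where the conductor exponent is capped at $1$ and $|f(\mathfrak{p}_v)|$ is controlled by the class function — is in fact slightly more explicit than the paper, which simply asserts the number of summands is $\ll_{k,H} 1$, but the argument is the same in substance.
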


\begin{proof} We will prove the statement for the non-archimedean places of $k$, as the proof for the archimedean places is the same but without the factor of $\frac{1}{|H|}$. We have chosen our measure to be such that \[ \hat{f}_{v,H}(x_v;s) = \frac{1}{|H|}\sum_{\chi_v \in \Hom(k_v^{\times}, H)}\frac{f_v(\chi_v)\langle \chi_v,x_v\rangle}{\Phi_v(\chi_v)^s}.\] This sum is finite and since $\Phi_v(\chi_v) \neq 0 $ for all $\chi_v$, the local Fourier transform $\hat{f}_{v,H}(x_v;s)$ is holomorphic on $\CC$ (except for possible branch point singularities).

As $f_v$ is frobenian multiplicative, by definition it satisfies that there exists some $H \in \NN$ such that $|f_v(\mathfrak{p}_v^k)| \leq H^k$ for primes $\mathfrak{p}_v$ of $k_v$, meaning that each summand is bounded. Furthermore, the number of summands is bounded by $\ll_{k,H} 1$ giving the second result. For the final part, let $f_v$ be real valued and non-negative, and consider $\hat{f}_{v,H}(x_v;s)$ when $x_v=1$. Here we have

    \begin{align*}
        \hat{f}_{v,H}(1;s) = & \: \frac{1}{|H|}\sum_{\chi_v \in \Hom(k_v^{\times}, H)}\frac{f_v(\chi_v)\langle \chi_v,1\rangle}{\Phi_v(\chi_v)^s} \\
        = & \: \frac{1}{|H|} \sum_{\chi_v \in \Hom(k_v^{\times}, H)} \frac{f_v(\chi_v)}{\Phi_v(\chi_v)^s}.
    \end{align*} By definition  $f$ is a multiplicative function and so takes value $1$ on the trivial homomorphism $\chi_v = \mathbbm{1} \in \Hom(k_v^{\times},H)$. Furthermore $\Hom(k_v^{\times},H)$ is non-empty, so the sum is non-empty, and hence $\hat{f}_{v,H}(1;s)> 0$ for $s \in \RR$.\end{proof}

\subsection{Poisson Summation}

In this section, we will introduce a Poisson summation formula, the proof of which is given in \cite[Prop.~$3.8$]{HNPabelianext} and \cite[Prop.~$3.9$]{Frei_2022}.

\begin{lem}
    The local function $f_v$ is $\Hom(k_v^{\times}/\mathcal{O}_v^{\times},H)$ invariant and $f_v(\psi_v) = 1$ for all $\psi_v \in \Hom(k_v^{\times}/\mathcal{O}_v^{\times},H)$.
\end{lem}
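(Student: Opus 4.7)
The plan is to prove the two assertions by unwinding the definition of the conductor in Definition \ref{rami} and exploiting the multiplicativity of $f$ from Definition \ref{frob funct}. Both claims reduce to the observation that for any $\chi_v \in \Hom(k_v^{\times}, H)$ the conductor of $\ker(\chi_v)$ depends only on the restriction of $\chi_v$ to the higher unit groups $1 + \mathfrak{p}_v^n$.

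For the second assertion, let $\psi_v \in \Hom(k_v^{\times}/\mathcal{O}_v^{\times}, H)$. By construction $\psi_v$ is trivial on $\mathcal{O}_v^{\times}$, i.e.\ unramified in the sense of Definition \ref{rami}, so the conductor of $\ker(\psi_v)$ is the unit ideal $\mathcal{O}_v$. Since $f$ is multiplicative it sends the unit ideal to $1$, and hence $f_v(\psi_v) = 1$.

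For the first assertion, I would note that $\Hom(k_v^{\times}/\mathcal{O}_v^{\times}, H)$ sits inside the abelian group $\Hom(k_v^{\times}, H)$ via pullback along the quotient map $k_v^{\times} \twoheadrightarrow k_v^{\times}/\mathcal{O}_v^{\times}$, and acts on it by pointwise multiplication in $H$. Invariance of $f_v$ then amounts to the equality $f_v(\chi_v \cdot \psi_v) = f_v(\chi_v)$ for every $\chi_v \in \Hom(k_v^{\times}, H)$ and $\psi_v \in \Hom(k_v^{\times}/\mathcal{O}_v^{\times}, H)$. I would establish this by showing that $\chi_v$ and $\chi_v \cdot \psi_v$ have identical conductors: because $\psi_v$ is trivial on $\mathcal{O}_v^{\times}$, it is in particular trivial on each $1 + \mathfrak{p}_v^n$ with $n \geq 1$, so $(\chi_v \cdot \psi_v)|_{1 + \mathfrak{p}_v^n} = \chi_v|_{1 + \mathfrak{p}_v^n}$. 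Consequently, for every $n \geq 1$, $1 + \mathfrak{p}_v^n \subseteq \ker(\chi_v)$ if and only if $1 + \mathfrak{p}_v^n \subseteq \ker(\chi_v \cdot \psi_v)$, and by Definition \ref{rami} the two conductors coincide. Since $f_v$ is a function of the conductor alone, the invariance follows.

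I do not anticipate any real obstacle here; the statement is essentially a book-keeping exercise, and the only minor subtlety is identifying the action of $\Hom(k_v^{\times}/\mathcal{O}_v^{\times}, H)$ on $\Hom(k_v^{\times}, H)$ with the natural pointwise multiplication coming from the abelian group structure on the target.
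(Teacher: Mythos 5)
Your proof is correct and follows essentially the same route as the paper: both reduce the lemma to the observation that an unramified $\psi_v$ does not alter the conductor of $\chi_v$, so $f_v$ (which depends only on the conductor) is unchanged. The one difference is cosmetic: the paper simply asserts $\Phi_v(\chi_v\psi_v) = \Phi_v(\chi_v)$ and then strings together equalities $f_v(\cc(\chi_v)) = f_v(\cc(\chi_v)\cc(\psi_v)) = f_v(\cc(\chi_v\psi_v))$, while you actually justify the conductor equality by noting that $\psi_v$ is trivial on each $1+\mathfrak{p}_v^n$, so $\chi_v$ and $\chi_v\psi_v$ have the same restriction to these subgroups and hence the same conductor (this covers $n\geq 0$, including the unramified case $n=0$). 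That is a small improvement in explicitness rather than a different method. For the second assertion the paper specializes $\chi_v = \mathbbm{1}$ in the invariance identity, whereas you argue directly that $\cc(\psi_v)=\mathcal{O}_v$ and $f$ multiplicative forces $f_v(\psi_v)=1$; these are equivalent one-liners.
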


\begin{proof}
    Let $\psi_v \in \Hom(k_v^{\times}/\mathcal{O}_v^{\times},H)$ and $\chi_v \in \Hom(k_v^{\times},H)$. For an unramified homomorphism $\psi_v$, we have $\Phi_v(\chi_v\psi_v) = \Phi_v(\chi_v)$. Moreover, if the conductor $\mathfrak{c}(\chi_v)$ is $\PR_v^n$, we have \[f_v(\cc(\chi_v)) = f_v(\PR_v^{n}) =  f_v(\PR_v^{n} \cdot 1) = f_v(\cc(\chi_v) \cc(\psi_v)) = f_v(\cc(\chi_v\psi_v)). \]
    This means that $f_v$ satisfies $f_v(\chi_v\psi_v) = f_v(\chi_v)$, so $f_v$ is $\Hom(k_v^{\times}/\mathcal{O}_v^{\times},H)$-invariant. The second part follows from setting $\chi_v = \mathbbm{1}$.
\end{proof} It then follows by \cite[Lem.~$3.8$]{Frei_2022} that for $v \not\in S$ we can write \begin{equation} \label{rewrite FT}
    \hat{f}_{v, H}(x_v;s) = \begin{cases}
    \sum_{\chi_v \in \Hom(\mathcal{O}_v^{\times},H)} \frac{f_{v}(\chi_v)\langle \chi_v, x_v \rangle}{\Phi_v(\chi_v)^s}, & \text{if} \: x\in \mathcal{O}_v^{\times} \otimes H^{\wedge}, \\
    0, & \text{otherwise}.
\end{cases}\end{equation} This is due to the fact that the split exact sequence 

 \begin{adjustbox}
{center}\begin{tikzcd}
1 \arrow[r] & \mathcal{O}_v^{\times} \arrow[r] & k_v^{\times} \arrow[r] & k_v^{\times}/\mathcal{O}_v^{\times} \arrow[r] & 1
\end{tikzcd}
\end{adjustbox}
induces the split exact sequence 

 \begin{adjustbox}
{center}\begin{tikzcd}
1 \arrow[r] & \Hom(k_v^{\times}/\mathcal{O}_v^{\times}, H) \arrow[r] & \Hom(k_v^{\times},H) \arrow[r] & \Hom(\mathcal{O}_v^{\times}, H) \arrow[r] & 1.
\end{tikzcd}
\end{adjustbox}
This allows us to split the sum over $\Hom(k_v^{\times},H)$  into two separate sums over $\Hom(k_v^{\times}/\mathcal{O}_v^{\times}, H)$ and $\Hom(\mathcal{O}_v^{\times}, H)$. The result follows from the $\Hom(k_v^{\times}/\mathcal{O}_v^{\times}, H)$-invariance of $f_v$.

In order to relate the Fourier transforms to the original frobenian multiplicative functions we will use the following Poisson summation formula.

\begin{lem}[Poisson summation]
    Let $x \in \mathcal{O}_S^{\times} \otimes H^{\wedge}$. Then for $\re(s)>1$, the global Fourier transform $\hat{f}_{H}(x;s)$ exists and the following formula holds: \begin{equation} \label{poi sum}
        \sum_{\chi \in \Hom(\AAA^{\times}/ k^{\times},H)}\frac{f(\chi)}{\Phi(\chi)^s} = \frac{1}{|\mathcal{O}_k^{\times} \otimes H^{\wedge}|}\sum_{x \in \mathcal{O}_S^{\times} \otimes H^{\wedge} }\hat{f}_{H}(x;s).
    \end{equation} 
\end{lem}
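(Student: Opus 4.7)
The plan is to derive this as a special case of Pontryagin--Poisson summation for locally compact abelian groups. Set $A = \Hom(\AAA^{\times}, H)$, with Pontryagin dual $\widehat{A} = \AAA^{\times} \otimes H^{\wedge}$ under the pairing recalled before Definition~\ref{rami}, and let $K = \Hom(\AAA^{\times}/k^{\times}, H) \subseteq A$ denote the closed subgroup of characters trivial on $k^{\times}$. First I would check that $K$ is discrete in $A$ and that $A/K$ is compact (the latter because the restriction map $A \to \Hom(k^{\times}, H)$ lands in the compact product space $H^{k^{\times}}$). Absolute convergence of the Dirichlet series on the left-hand side for $\re(s) > 1$ follows from the bound $|f(\mathfrak{c}(\chi))| \ll_{\epsilon} \Phi(\chi)^{\epsilon}$ in Definition~\ref{frob funct}, combined with the standard count $\#\{\chi \in K : \Phi(\chi) \leq B\} \ll B(\log B)^{c}$ from Wood \cite[Lem.~2.10]{Wood_2010}; this also guarantees that $\hat{f}_{H}(x;s)$ is well-defined and that the global Fourier decomposition (\ref{euler prod}) is valid.

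Next I would identify the annihilator $K^{\perp} \subseteq \widehat{A}$. Dualizing the short exact sequence $1 \to K \to A \to \Hom(k^{\times}, H) \to 1$ gives $K^{\perp} = \widehat{\Hom(k^{\times}, H)} \cong k^{\times} \otimes H^{\wedge}$, sitting inside $\widehat{A}$ via the diagonal $k^{\times} \hookrightarrow \AAA^{\times}$. The classical Poisson summation formula on locally compact abelian groups then yields
\[
\sum_{\chi \in K} \frac{f(\chi)}{\Phi(\chi)^{s}} \;=\; \frac{1}{\operatorname{vol}(A/K)} \sum_{x \in k^{\times} \otimes H^{\wedge}} \hat{f}_{H}(x;s),
\]
where the covolume is taken with respect to the product of the local Haar measures on $A$ fixed in the discussion preceding Definition~\ref{rami}.

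By the Euler product (\ref{euler prod}) and the local vanishing (\ref{rewrite FT}), each factor $\hat{f}_{v,H}(x_v;s)$ with $v \notin S$ vanishes unless $x_v \in \mathcal{O}_v^{\times} \otimes H^{\wedge}$, so the sum on the right is supported on those $x \in k^{\times} \otimes H^{\wedge}$ whose $v$-adic valuation is zero at every $v \notin S$; since $\mathcal{O}_S$ has trivial class group by the choice of $S$, this locus is exactly $\mathcal{O}_S^{\times} \otimes H^{\wedge}$. The main technical obstacle is then the covolume identity $\operatorname{vol}(A/K) = |\mathcal{O}_k^{\times} \otimes H^{\wedge}|$: this requires careful bookkeeping of the chosen local measures (counting at archimedean places, $|H|^{-1}$ times counting at finite places) through the quotient $A/K$, with the global units $\mathcal{O}_k^{\times}$ entering as the kernel of the natural valuation map from $\mathcal{O}_S^{\times}$ to the finite places in $S$. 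I would carry this computation out following the blueprint of \cite[Prop.~3.8]{HNPabelianext} and \cite[Prop.~3.9]{Frei_2022}.
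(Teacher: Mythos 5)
Your proposal follows the same route as the paper's proof of this lemma, which simply defers to Prop.~3.9 of Frei--Loughran--Newton: Pontryagin--Poisson summation for locally compact abelian groups, identification of the dual lattice, and reduction to $\mathcal{O}_S^\times \otimes H^\wedge$ via the local vanishing of the Fourier transforms together with the trivial $S$-class group. One caution on the annihilator step: the restriction map $A \to \Hom(k^\times, H)$ need not be surjective (not every homomorphism $k^\times \to H$ extends continuously to $\AAA^\times$), so dualising gives only a surjection $k^\times \otimes H^\wedge \twoheadrightarrow K^\perp$ rather than the isomorphism you assert, and the fibres of that surjection have to be absorbed into the normalisation alongside $\operatorname{vol}(A/K)$ when matching the stated factor $|\mathcal{O}_k^\times \otimes H^\wedge|^{-1}$; this is a refinement rather than a change of method, and the sources you cite carry out exactly that bookkeeping.
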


\begin{proof}
    See \cite[Prop.~$3.9$]{Frei_2022} \end{proof} Note that the sum on the right converges as $\mathcal{O}_S^{\times} \otimes H^{\wedge}$ is finite by Dirichlet's $S$-unit theorem. 
\subsection{Local Fourier Transforms}

From now on, we will write $x_v = 1$ to mean that $x_v$ is trivial in $k_v^{\times} \otimes H^{\wedge}$.
\begin{lem}[Local Fourier transforms] \label{loc fou}
    Let $v \not\in S$ and $x_v \in \mathcal{O}_v^{\times} \otimes G^{\wedge}$. Then we have \[\hat{f}_{v, H}(x_v;s) = \begin{cases}
    1 + (|\Hom(\FF_v^{\times}, H)|-1)f(\mathfrak{p}_v)q_v^{-s}, & \text{if} \: x_ v= 1,\\
    1 - f(\mathfrak{p}_v)q_v^{-s}, & \text{if} \: x_v \: \text{is non trivial}.
\end{cases}\] 
\end{lem}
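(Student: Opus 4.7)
The plan is to start from the simplified formula (\ref{rewrite FT}), valid for $v \notin S$ and $x_v \in \mathcal{O}_v^{\times} \otimes H^{\wedge}$:
\[
\hat{f}_{v,H}(x_v;s) = \sum_{\chi_v \in \Hom(\mathcal{O}_v^{\times},H)} \frac{f_v(\chi_v)\langle \chi_v, x_v\rangle}{\Phi_v(\chi_v)^s}.
\]
First I would exploit the fact that $S$ contains every place lying above a prime $p \leq |G|$, so $\gcd(q_v,|H|)=1$ for $v\notin S$. Since $1+\mathfrak{p}_v$ is a pro-$p$ group (where $p$ is the residue characteristic) and $H$ has order coprime to $p$, every homomorphism $\mathcal{O}_v^{\times}\to H$ is trivial on $1+\mathfrak{p}_v$ and therefore factors through the quotient $\mathcal{O}_v^{\times}/(1+\mathfrak{p}_v) \cong \FF_v^{\times}$. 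This gives a canonical identification $\Hom(\mathcal{O}_v^{\times},H)=\Hom(\FF_v^{\times},H)$.

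Next, I would split the sum according to whether $\chi_v$ is trivial or not. The trivial character contributes $1$ since $f_v(\mathbbm{1})=1$ and $\Phi_v(\mathbbm{1})=1$. For a nontrivial $\chi_v$, the kernel does not contain $\mathcal{O}_v^{\times}$ but does contain $1+\mathfrak{p}_v$, hence by Definition \ref{rami} the conductor is exactly $\mathfrak{p}_v$ and $\Phi_v(\chi_v)=q_v$; multiplicativity of $f$ together with the frobenian specification gives $f_v(\chi_v)=f(\mathfrak{p}_v)$. Thus
\[
\hat{f}_{v,H}(x_v;s) = 1 + f(\mathfrak{p}_v)q_v^{-s}\sum_{\substack{\chi_v \in \Hom(\FF_v^{\times},H) \\ \chi_v\neq \mathbbm{1}}}\langle \chi_v, x_v\rangle.
\]

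Finally, the character sum is evaluated using orthogonality on the finite abelian group $\Hom(\FF_v^{\times},H)$. If $x_v=1$, every pairing equals $1$, producing the factor $|\Hom(\FF_v^{\times},H)|-1$ and the first case of the lemma. If $x_v$ is nontrivial, the full character sum $\sum_{\chi_v}\langle \chi_v, x_v\rangle$ vanishes, so removing the trivial term leaves $-1$, which yields the second case. The only genuinely delicate point is being careful about what ``$x_v$ nontrivial'' means: because all $\chi_v$ factor through $\FF_v^{\times}$, the pairing depends only on the image of $x_v$ in $\FF_v^{\times}\otimes H^{\wedge}$, and one has to note that since the pairing $\Hom(\FF_v^{\times},H)\times(\FF_v^{\times}\otimes H^{\wedge})\to S^{1}$ is a perfect pairing of finite abelian groups, orthogonality applies precisely when this image is nontrivial; I would briefly justify that this is exactly the sense of nontriviality used in the statement.
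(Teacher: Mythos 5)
Your proposal is correct and follows essentially the same route as the paper: start from the reduced formula \eqref{rewrite FT}, separate off the trivial character, note that the remaining characters are tamely ramified with conductor exactly $\mathfrak{p}_v$, and finish with character orthogonality on $\Hom(\FF_v^{\times},H)$. The only presentational difference is that you justify $\Hom(\mathcal{O}_v^{\times},H)\cong\Hom(\FF_v^{\times},H)$ in one step via the pro-$p$ structure of $1+\mathfrak{p}_v$, whereas the paper spells this out through the split exact sequence from Hensel's lemma and the filtration $1+\mathfrak{p}_v\supset 1+\mathfrak{p}_v^2\supset\cdots$ — the same idea in a slightly more expanded form — and your closing remark about the pairing descending to $\FF_v^{\times}\otimes H^{\wedge}$ is a worthwhile clarification the paper leaves implicit.
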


\begin{proof}

We know that for $v \not\in S$, if $\chi_v$ is ramified it is tamely ramified. Clearly for unramified $\chi_v$ we have $\Phi_v(\chi_v) =1$, and for ramified $\chi_v$, we have $\Phi_v(\chi_v) = q_v$. Furthermore, $f_v$ is defined on the conductor of $\ker(\chi_v)$, which is the maximal ideal $\mathfrak{p}_v$ of $\mathcal{O}_v$. By (\ref{rewrite FT}) we have that \[\hat{f}_{v,H}(x_v;s) = \sum_{\chi_v \in \Hom(\mathcal{O}_v^{\times},H)} \frac{f_{v}(\chi_v)\langle \chi_v, x_v \rangle}{\Phi_v(\chi_v)^s}\] and since the unramified homomorphism is trivial on $\mathcal{O}_v^{\times}$ it must contribute $1$ to the sum. Then we can write
\begin{align*}
    \hat{f}_{v,H}(x_v;s) 
& = 1 + q_v^{-s}\sum_{\substack{\chi_v \in \Hom(\mathcal{O}_v^{\times},H) \\ \chi_v \neq \mathbbm{1}}}f_v(\chi_v)\langle \chi_v, x_v \rangle \\
& = 1 + f(\mathfrak{p}_v) q_v^{-s}\sum_{\substack{\chi_v \in \Hom(\mathcal{O}_v^{\times},H) \\ \chi_v \neq \mathbbm{1}}}\langle \chi_v, x_v \rangle. 
\end{align*} We now show that there is an isomorphism between the groups $\Hom(\mathcal{O}_v^{\times},H)$ and $\Hom(\FF_v^{\times},H)$, which the implies that $|\Hom(\mathcal{O}_v^{\times},H)| = |\Hom(\FF_v^{\times},H)|$. We use the argument from \cite[Lem.~$3.10$]{Frei_2022} which we include for completeness. By Hensel's lemma there is a splitting of the exact sequence

 \begin{adjustbox}
{center}\begin{tikzcd}
1 \arrow[r] & 1 + \mathfrak{p}_v \arrow[r] & \mathcal{O}_v^{\times} \arrow[r] & \FF_v^{\times} \arrow[r] & 1
\end{tikzcd}
\end{adjustbox}
and applying the $\Hom( \cdot,H)$ functor we get the split short exact sequence

\begin{adjustbox}
{center}\begin{tikzcd}
1 \arrow[r] & \Hom( \FF_v^{\times} ,H) \arrow[r] & \Hom( \mathcal{O}_v^{\times} ,H) \arrow[r] & \Hom( 1 + \mathfrak{p}_v ,H) \arrow[r] & 1.
\end{tikzcd}
\end{adjustbox}
Thus to show the isomorphism it is enough to show that the set $\Hom( 1 + \mathfrak{p}_v ,H)$ is trivial by showing that there are no non-trivial homomorphisms $1 + \mathfrak{p}_v \rightarrow H$. Consider such a continuous homomorphism. Its kernel contains $1 + \mathfrak{p}_v^n$ for some positive integer $n$. The quotient of any two successive terms in the filtration $1 + \mathfrak{p}_v \supset 1 + \mathfrak{p}_v^2 \supset ... \supset 1 + \mathfrak{p}_v^n$ has order $q_v$ by \cite[$IV$. Prop.~$2.6$]{serre1979local}. Consequently the size of the quotient $1 + \mathfrak{p}_v/1 + \mathfrak{p}_v^n$ must be equal to $q_v^k$ for a positive integer $k$, and this is the size of the image of the homomorphism. However, since $v \not\in S$, the exponent of $H$ must be coprime to $q_v$. It then follows that the image of the homomorphism, and thus the homomorphism itself, is trivial. We have proved that $\Hom( 1 + \mathfrak{p}_v ,H)$ is trivial and by exactness of the above sequence we have that $\Hom( \mathcal{O}_v^{\times} ,H) \cong \Hom( \FF_v^{\times} ,H) $. In particular, they are both the same size. Therefore by character orthogonality \[\sum_{\chi_v \in \Hom(\mathcal{O}_v^{\times},H)} \langle \chi_v, x_v \rangle = \begin{cases}
    |\Hom( \FF_v^{\times} ,H)|, & \: \text{if} \: x_v = 1,\\
    0, & \: \text{otherwise},
\end{cases}\] and when $f(\mathfrak{p}_v) \neq 0$
\[\hat{f}_{v, H}(x_v;s) = \begin{cases}
    1 + (|\Hom(\FF_v^{\times}, H)|-1)f(\mathfrak{p}_v)q_v^{-s}, & \text{if} \: x_v= 1,\\
    1 - f(\mathfrak{p}_v)q_v^{-s}, & \text{if} \: x_v \: \text{is non trivial}.
\end{cases}\] Clearly for $f(\mathfrak{p}_v) = 0$ we have $\hat{f}_{v, H}(x_v;s) = 1$. \end{proof}

\subsection{Exponent} \label{exp section}

We wish to find an expression for the exponent $\varpi(k,G,f)$ in the asymptotic of Theorem \ref{short main thm}. To do this we will use the theory of frobenian functions discussed in $\S 2$. By definition of frobenian multiplicative functions and the assumptions we imposed on $S$ in $\S 3.1$, the function $f$ is an $S$-frobenian function when restricted to the places of $k$. In particular there exists some field extension $K/k$ with Galois group $\Gamma = \Gal(K/k)$ such that $S$ contains all the primes that ramify in $K/k$ and a class function $\varphi: \Gamma \rightarrow \CC$ such that $f(\mathfrak{p}_v) = \varphi(\text{Frob}_v)$ for all $v \not\in S$. Also by our assumptions on $S$, we know from \cite[Cor.~$3.13$]{Frei_2022} that for $x \in \mathcal{O}_{S}^{\times} \otimes H^{\wedge}$ the following function is frobenian for the set $S$:
\[\rho_x(v) = \begin{cases}
    |\Hom(\FF_v^{\times}, H)| - 1, & \text{if} \: x_v = 1,\\
    -1, & \text{if} \: x_v  \: \text{is non trivial}.
\end{cases}\] We define the class function of $\rho$ as follows, following the argument in \cite[$\S 3$]{Frei_2022}. Let $e$ be the exponent of $H$ and $d_H(v)$ be the function \[d_H(v) = \max\{d:d |\gcd(e,q_v-1)\},\] which is an $S$-frobenian function by \cite[ Lem.~$3.11$]{Frei_2022}, and furthermore satisfies that \[|\Hom(\FF_v^{\times}, H)| = |H[d_H(v)]|.\] Then since $d_H$ is an $S$-frobenian function, we can use its class function to define the class function of $\rho$. Let  \[\Sigma_d = \Gal(k(\zeta_{e})/k(\zeta_d)) \backslash \bigcup_{\substack{d'|\frac{e}{d}\\ d' \neq 1}} \Gal(k(\zeta_{e})/k(\zeta_{dd'})) \subset \Gal(k(\zeta_{e})/k).\] We will set $\psi:\Gal(k(\zeta_{e})/k) \rightarrow \CC $ to be the function that takes the constant value $d$ on $\Sigma_d$ for every $d$ dividing $e$. This function $\psi$ is the class function of $d_H$. As we can write $\rho$ as the function \[\rho_x(v) = \begin{cases}
    |H[d_H(v)]| - 1, & \text{if} \: x_v = 1, \\
    -1, & \text{otherwise},
\end{cases}\] we can define its associated class function, defined on $\Gal(k(\zeta_e)/k)$ as the function sending $\sigma \mapsto |H[\psi(\sigma)]| - 1$. 

\begin{lem} \label{lambda}
    The function $\lambda := \rho_x \cdot f$ given by \[ \lambda_x(v)  = \begin{cases}
    (|\Hom(\FF_v^{\times}, H)| - 1)f(\mathfrak{p}_v), & \text{if} \: x_v = 1, \\
    - f(\mathfrak{p}_v), & \text{if} \: x_v  \: \text{is non trivial},
\end{cases} \] is $S$-frobenian for $x \in \mathcal{O}_{S}^{\times} \otimes H^{\wedge}$. Its class function $\vartheta$ is given by the product of the class functions of $\rho$ and $f$. It is defined on $\Gal(k(\zeta_e)K/k)$, where $k(\zeta_e)K$ is the compositum of the fields $K$ and $k(\zeta_e)$.
\end{lem}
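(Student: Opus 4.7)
The plan is to show that $\lambda_x$ inherits $S$-frobenianness from the factors $\rho_x$ and $f$ by lifting the two associated class functions to a common Galois group, namely $\Gal(k(\zeta_e)K/k)$, and then taking their pointwise product.

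First, I would recall the two inputs. By Corollary $3.13$ of \cite{Frei_2022} (cited just before the lemma), $\rho_x$ is $S$-frobenian with class function $\tilde{\rho}:\Gal(k(\zeta_e)/k)\to\CC$ sending $\sigma\mapsto |H[\psi(\sigma)]|-1$ when $x_v=1$, and to $-1$ otherwise; the cases are distinguished by whether $\sigma$ lands in the subset of $\Gal(k(\zeta_e)/k)$ determined by the splitting behaviour of $x$, which is controlled by a subfield fixed inside $k(\zeta_e)$. By hypothesis in Definition \ref{frob funct}, the restriction of $f$ to places is $S$-frobenian with some class function $\varphi:\Gal(K/k)\to\CC$ satisfying $\varphi(\Frob_v)=f(\PR_v)$ for $v\notin S$. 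Since $S$ already contains every place that ramifies in $K/k$ and every place dividing $|G|$ (hence every place ramifying in $k(\zeta_e)/k$ as $e\mid|H|$ divides $|G|$), $S$ also contains all places that ramify in the compositum $k(\zeta_e)K/k$.

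Next I would pull both class functions back along the natural surjections
\[
\Gal(k(\zeta_e)K/k)\twoheadrightarrow\Gal(k(\zeta_e)/k),\qquad \Gal(k(\zeta_e)K/k)\twoheadrightarrow\Gal(K/k),
\]
obtaining class functions $\tilde{\rho}^{\sharp}$ and $\varphi^{\sharp}$ on $\Gal(k(\zeta_e)K/k)$. The product of two class functions is again a class function, so I would define
\[
\vartheta := \tilde{\rho}^{\sharp}\cdot\varphi^{\sharp}\ :\ \Gal(k(\zeta_e)K/k)\longrightarrow\CC.
\]
For any $v\notin S$ the place $v$ is unramified in $k(\zeta_e)K/k$, and the Frobenius class $\Frob_v\in\Gal(k(\zeta_e)K/k)$ restricts to the Frobenius classes in the two quotients. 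Hence
\[
\vartheta(\Frob_v) = \tilde{\rho}(\Frob_v|_{k(\zeta_e)})\cdot\varphi(\Frob_v|_K) = \rho_x(v)\,f(\PR_v)=\lambda_x(v),
\]
which verifies the frobenian property of $\lambda_x$ with the claimed class function on the claimed Galois group.

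Finally I would check the two growth bounds in Definition \ref{frob funct}. The bound $|\lambda_x(\PR^n)|\le H'^n$ follows because $|\rho_x(v)|\le |H|$ is bounded (independent of $v$) while $f$ already satisfies such a bound by hypothesis; multiplicativity is immediate from the factorisation $\lambda_x=\rho_x\cdot f$ of the corresponding local functions. The polynomial bound $|\lambda_x(\mathfrak{n})|\le C_\epsilon N(\mathfrak{n})^\epsilon$ is obtained from the same bound for $f$ together with the uniform bound on $\rho_x$. The only point that requires any care (and is the closest thing to an obstacle) is the compatibility of Frobenius under restriction through the diagram of Galois groups, together with confirming that the two excluded-place sets can be taken to coincide; both points reduce to the fact that $S$ was chosen to contain every place ramifying in $K$ and every place dividing $|G|$, hence every place ramifying in $k(\zeta_e)K$.
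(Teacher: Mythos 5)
Your proposal is correct and takes essentially the same route as the paper: the paper simply cites \cite[Lem.~2.3]{Loughran2019FrobenianMF} (products of $S$-frobenian functions are $S$-frobenian), and you are spelling out the proof of that cited fact — lift both class functions to the compositum $\Gal(k(\zeta_e)K/k)$, multiply, check Frobenius compatibility. Two small remarks: the final paragraph on the growth bounds from Definition~\ref{frob funct} is superfluous, since the lemma asserts only that $\lambda$ is a plain $S$-frobenian function (on places), not an $S$-frobenian \emph{multiplicative} function on ideals; and your passing claim that the condition ``$x_v=1$'' is controlled by a subfield of $k(\zeta_e)$ is not literally right for non-trivial $x$ — it is governed by a Kummer extension of the form $k(\zeta_e,\sqrt[e]{x})$, as in Lemma~\ref{gal} — though this imprecision is inherited from the lemma statement itself, which in effect describes the class function in the case $x=1$, the case actually used in Proposition~\ref{inv ld}.
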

\begin{proof}
The result follows directly from \cite[Lem. $2.3$]{Loughran2019FrobenianMF}, as  $f$ and $\rho_x$ are $S$-frobenian functions. \end{proof} We will write $\varpi(k,H,f,x)$ to be the mean of $\lambda_x$, and define $\varpi(k,H,f) : = \varpi(k,H,f,1)$. 
\begin{prop} \label{inv ld}
   If $f$ is real valued and non-negative, then for all $x \in \mathcal{O}_S^{\times} \otimes H^{\wedge}$, we have $\varpi(k,H,f,x) \leq \varpi(k,H,f)$. Furthermore, when $K$ and $k(\zeta_e)$ are linearly disjoint, we have \[
       \varpi(k,H,f) = m(f)\sum_{g \in H \backslash \{ \text{id}_G \}}\frac{1}{[k(\zeta_{|g|}):k]}.
   \]
\end{prop}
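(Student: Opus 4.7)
The plan is to exploit the multiplicative decomposition $\lambda_x = \rho_x \cdot f$ provided by Lemma \ref{lambda}, so that all of the analysis passes to the class function level as a product of a factor defined on $\Gal(k(\zeta_e)/k)$ and the class function $\varphi$ of $f$ on $\Gal(K/k)$. I would translate all pointwise statements about primes into pointwise statements about class functions via Chebotarev density, so that means can be compared or computed by a single average over a finite Galois group.

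For the first assertion I would first check the pointwise inequality $\rho_x(v) \leq \rho_1(v)$ at every $v \notin S$: the two sides are equal when $x_v = 1$, while when $x_v$ is nontrivial one has $\rho_x(v) = -1 \leq |\Hom(\FF_v^{\times}, H)| - 1 = \rho_1(v)$, the right hand side being non-negative since the trivial homomorphism is always counted. Multiplying by $f(\PR_v) \geq 0$ then yields $\lambda_x(v) \leq \lambda_1(v)$ for all $v \notin S$. Since both functions are $S$-frobenian, this inequality transfers to the class functions on the finite Galois group on which both are simultaneously defined (by Chebotarev density every conjugacy class is the Frobenius class of infinitely many unramified places), and averaging over that group gives $\varpi(k,H,f,x) \leq \varpi(k,H,f)$.

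For the second assertion I set $x = 1$. The hypothesis that $K$ and $k(\zeta_e)$ are linearly disjoint over $k$ yields $\Gal(k(\zeta_e) K / k) \cong \Gal(k(\zeta_e)/k) \times \Gal(K/k)$, and the class function $\vartheta_1$ becomes a product of functions depending only on each coordinate. Consequently the mean factorises as
\[ m(\lambda_1) = m(f) \cdot \frac{1}{|\Gal(k(\zeta_e)/k)|} \sum_{\sigma \in \Gal(k(\zeta_e)/k)} \bigl(|H[\psi(\sigma)]| - 1\bigr), \]
reducing the problem to evaluating this cyclotomic average.

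The key observation for the remaining combinatorial identity is that $\psi(\sigma)$ is by definition the largest divisor $d$ of $e$ with $\sigma|_{k(\zeta_d)} = \id$, so $|g|$ divides $\psi(\sigma)$ if and only if $\sigma \in \Gal(k(\zeta_e)/k(\zeta_{|g|}))$. Using this to rewrite $|H[\psi(\sigma)]| - 1 = \sum_{g \in H \setminus \{\id\}} \mathbbm{1}\{\sigma \in \Gal(k(\zeta_e)/k(\zeta_{|g|}))\}$ and swapping the order of summation gives
\[ \sum_{\sigma} \bigl(|H[\psi(\sigma)]| - 1\bigr) = \sum_{g \in H \setminus \{\id\}} |\Gal(k(\zeta_e)/k(\zeta_{|g|}))| = |\Gal(k(\zeta_e)/k)| \sum_{g \in H \setminus \{\id\}} \frac{1}{[k(\zeta_{|g|}) : k]}, \]
from which the stated formula follows after dividing by $|\Gal(k(\zeta_e)/k)|$ and multiplying by $m(f)$. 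The main obstacle I anticipate is the verification that $|g| \mid \psi(\sigma) \Leftrightarrow \sigma|_{k(\zeta_{|g|})} = \id$; this relies on the fact that the set of divisors $d \mid e$ with $\sigma|_{k(\zeta_d)} = \id$ is closed under lcm, so that $\psi(\sigma)$ is indeed a well-defined maximum. Once this is in place the remaining steps are routine bookkeeping.
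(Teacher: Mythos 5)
Your proof is correct and follows essentially the same route as the paper: for the inequality you supply the pointwise comparison $\lambda_x(v)\leq\lambda_1(v)$ (which the paper simply declares ``clear''), and for the formula you use the same linear-disjointness factorisation $\Gal(k(\zeta_e)K/k)\cong\Gal(k(\zeta_e)/k)\times\Gal(K/k)$. The only difference is that where the paper cites the proof of \cite[Lem.~3.15]{Frei_2022} for the final cyclotomic average, you prove the combinatorial identity directly by rewriting $|H[\psi(\sigma)]|-1$ as $\sum_{g\neq\id}\mathbbm{1}\{\sigma|_{k(\zeta_{|g|})}=\id\}$ and swapping the order of summation --- which is exactly what that cited lemma does, so this is a filled-in detail rather than a different argument. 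One small point worth making explicit when you ``transfer'' the pointwise inequality to the class function level: you should pass to a common finite quotient of $\Gamma_k$ through which both $\lambda_x$ and $\lambda_1$ factor (e.g.\ the Galois group of the compositum of their defining extensions) before invoking Chebotarev and averaging, since a priori the two class functions live on different groups; you gesture at this but it deserves a sentence.
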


\begin{proof}
    It is clear that $\varpi(k,H,f,x) \leq \varpi(k,H,f,1)= \varpi(k,H,f)$ for all $x \in \mathcal{O}_S^{\times} \otimes H^{\wedge}$. For the second part, let $\Gamma = \Gal(k(\zeta_e)K/k)$, $\Gamma_e = \Gal(k(\zeta_e)/k)$ and $\Gamma_K = \Gal(K/k)$, and recall that for linearly disjoint fields, the Galois group of the compositum of the two fields is the product of the Galois group of each field. We have that $\varpi(k,H,f) = \varpi(k,H,f,1)$ is the mean of the $S$-frobenian function $\lambda'(v) = (|\Hom(\FF_v^{\times}, H)| - 1)f(\mathfrak{p}_v)$, which has class function $\vartheta: \Gal(k(\zeta_e)K/k) \rightarrow \CC$ given by $\vartheta(\sigma) = (|H[\psi(\sigma)]| - 1)\varphi(\sigma)$. So \[ \varpi(k,H,f,1) =  \frac{1}{|\Gamma|}\sum_{\sigma \in \Gamma}\vartheta(\sigma).\] By the linear disjointness of $k(\zeta_e)$ and $K$, there is a canonical isomorphism $\Gamma \cong \Gamma_e \times \Gamma_K$ sending $\sigma \mapsto (\sigma{\big|}_{\Gamma_e}, \sigma{\big|}_{\Gamma_K})$. In particular we have $|\Gamma| = |\Gamma_e| \cdot |\Gamma_K|$. Using this isomorphism we can write 
\begin{align*}
    \varpi(k,H,f,1)  = & \frac{1}{|\Gamma_e|}\sum_{\sigma_1 \in \Gamma_e}(|H[\psi(\sigma_1)]| - 1)\frac{1}{|\Gamma_K|}\sum_{\sigma_2 \in \Gamma_K}\varphi(\sigma_2)\\
    = & \: m(f) \sum_{g \in H \backslash \{ \text{id}_G \}}\frac{1}{[k(\zeta_{|g|}):k]}
\end{align*} where $|g|$ is the order of $g$ in $H$, and this last step follows from the proof of \cite[Lem.~$3.15$]{Frei_2022}.\end{proof} In general, the two fields $K$ and $k(\zeta_e)$ may not be linearly disjoint, and this formula may not hold. In this case $\varpi(k,H,f,1)$ should be calculated using the formula \[\varpi(k,H,f,1) = \int_{\gamma \in \Gal(\overline{k}/k)}\vartheta(\gamma) d\gamma.\]We are now able to state the main analytic result of this paper. \begin{thm} \label{main thm}
    Let $G$ be a non trivial finite abelian group, $S$ a finite set of places of $k$ and $|S_{\fin}|$ be the set of non-archimedean places in $S$. Let $f$ be a real valued and non-negative frobenian multiplicative function such that $f$ is $S$-frobenian when restricted to the places of $k$. Furthermore, for $v \in S$, let $\Lambda_v$ be some non-empty set of sub-$G$-extensions of $k_v$ such that $f$ takes the value $1$ on all elements of $\Lambda_v$. Then we have \[N(k,G,B,f) = c_{k,G,f}B(\log B)^{\varpi(k,G, f)-1}(1+o(1))\] where $\varpi(k,G, f) := \varpi(k,G, f, 1)$ is dependent on $f$. Moreover, when $f$ is real valued and non-negative and $\varpi(k,G, f) = \varpi$ does not take values in the set $\{0,-1,-2,...\}$, the leading constant is given by \begin{align*} c_{k, G,f} = \frac{(\Res_{s=1}\zeta_k(s))^{\varpi}}{\Gamma(\varpi)|\mathcal{O}_k^{\times}\otimes G^{\wedge}|}\sum_{x \in \mathcal{X}(k,G, f)} \prod_{v \not\in S}\zeta_{k,v}(1)^{-\varpi}\sum_{\chi_v \in \Hom(\mathcal{O}_v^{\times},G)} \frac{f_v(\chi_v)}{\Phi_v(\chi_v)} \: \times \\
        \frac{1}{|G|^{|S_{\fin}|}} \sum_{\substack{\chi_v \in \Hom(\prod_{v \in S} k_v^{\times},G) \\ \chi_v \in \Lambda_v}} \frac{1}{\prod_{v \in S}\Phi_v(\chi_v)\zeta_{k,v}(1)^{\varpi}}\sum_{x \in \mathcal{X}(k, G, f)} \prod_{v \in S}\langle \chi_v,x_v \rangle. \end{align*} where \begin{align*}
        \mathcal{X}(k, G, f) = \{x \in \mathcal{O}_S^{\times} \otimes G^{\wedge}: \:  & \text{for all places $v \not\in S$ with}  \: f(\PR_v) \neq 0, \\
        & \text{we have} \: x_v = 1 \in k_v^{\times} \otimes G^{\wedge}\},\end{align*} and this leading constant is non-zero if there exists a sub-$G$-extension $\psi$ that satisfies all the local conditions for places $v \in S$ and $f(\psi) \neq 0$.
\end{thm}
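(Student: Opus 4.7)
The plan is to work with the Dirichlet series
\[
D_f(s) = \sum_{\chi \in \Hom(\AAA^{\times}/k^{\times}, H)} \frac{f(\chi)}{\Phi(\chi)^s}
\]
for each subgroup $H \subseteq G$, and then combine the $H$-indexed analyses by a Möbius-type inclusion–exclusion to handle the surjectivity condition built into the definition of a $G$-extension. A preliminary reduction shows that only $H = G$ contributes to the main asymptotic: for $f$ real-valued and non-negative, the exponent $\varpi(k,H,f)$ is monotone in $H$ (it is a sum of non-negative terms indexed by non-identity elements of $H$, by Proposition \ref{inv ld}), so all proper subgroups yield genuinely smaller powers of $\log B$.

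The first substantive step is to apply the Poisson summation formula (Lemma 3.5) to rewrite
\[
D_f(s) = \frac{1}{|\mathcal{O}_k^{\times} \otimes H^{\wedge}|} \sum_{x \in \mathcal{O}_S^{\times} \otimes H^{\wedge}} \hat{f}_{H}(x;s),
\]
and expand each Fourier transform via the Euler product (\ref{euler prod}). For $v \not\in S$, Lemma \ref{loc fou} rewrites $\hat{f}_{v,H}(x_v;s)$ as $1 + \lambda_x(v) q_v^{-s}$ where $\lambda_x$ is the $S$-frobenian multiplicative function from Lemma \ref{lambda}. Proposition \ref{prop 2.3} then gives, for every $x$, a factorisation
\[
\prod_{v \not\in S} \hat{f}_{v,H}(x_v;s) = \zeta_k(s)^{\varpi(k,H,f,x)} G_x(s)
\]
with $G_x$ holomorphic in the region (\ref{reg}) and controlled by (\ref{bound}). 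Proposition \ref{inv ld} ensures $\varpi(k,H,f,x) \leq \varpi(k,H,f)$, with equality exactly on the subset $\mathcal{X}(k,G,f)$: at any $v \not\in S$ with $f(\mathfrak{p}_v) \neq 0$, the value $\lambda_x(v)$ drops from $(|\Hom(\FF_v^{\times},H)|-1)f(\mathfrak{p}_v)$ to $-f(\mathfrak{p}_v)$ as soon as $x_v \neq 1$, strictly decreasing the mean.

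Combining these factorisations, $D_f(s)$ acquires a singularity at $s=1$ of order exactly $\varpi = \varpi(k,G,f)$, with all other $x$ and all proper $H$ contributing lower-order singularities. A Selberg–Delange-type Tauberian argument (in the form used in \cite[$\S 3$]{Frei_2022}) then converts this into the asymptotic $c_{k,G,f} B(\log B)^{\varpi-1}(1+o(1))$; the Gamma factor $\Gamma(\varpi)^{-1}$ in the leading constant is precisely what forces the hypothesis $\varpi \not\in \{0,-1,-2,\dots\}$. Computing the constant explicitly amounts to applying the residue formula at the end of Proposition \ref{prop 2.3} to each $x \in \mathcal{X}(k,G,f)$ and matching the local Haar-measure normalisations at $v \in S$: the factor $|G|^{-|S_{\fin}|}$ is the product of the normalisations $\text{vol}(\Hom(k_v^{\times}/\mathcal{O}_v^{\times},H)) = 1$, while the restriction $\chi_v \in \Lambda_v$ captures the places where $f_v$ vanishes off a prescribed set of local extensions. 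Positivity of the constant under the stated existence hypothesis follows from Lemma \ref{loc fou prop}, which gives $\hat{f}_{v,H}(1;s) > 0$ for real $s$, together with the fact that the $x=1$ summand contributes a non-vanishing term in the sum defining $c_{k,G,f}$.

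The main obstacle is justifying rigorously that the exponent strictly drops both for proper subgroups $H \subsetneq G$ and for $x \not\in \mathcal{X}(k,G,f)$; in principle one can have $\varpi(k,H,f) = \varpi(k,G,f)$ for pathological $f$ (e.g.\ $m(f) = 0$), and one must either rule this out by the hypotheses or track the secondary terms in the Tauberian theorem and show that they are genuinely negligible. A secondary difficulty is that when $K$ and $k(\zeta_e)$ are not linearly disjoint, the class function $\vartheta$ on $\Gal(k(\zeta_e)K/k)$ of Lemma \ref{lambda} does not factor and one must work directly with the integral expression $\varpi(k,H,f,x) = \int_{\Gamma_k} \vartheta(\gamma)\, d\gamma$; the monotonicity and strict-inequality statements then require a careful comparison of the class functions of $\lambda_1$ and $\lambda_x$ on the compositum, using non-negativity of $f$ essentially.
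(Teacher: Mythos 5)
Your proposal follows the paper's own proof step by step: Poisson summation, the Euler-product factorisation $\hat{f}_H(x;s)=\zeta_k(s)^{\varpi(k,H,f,x)}G(x;s)$ via Proposition~\ref{prop 2.3}, identification of $\mathcal{X}(k,G,f)$ as the set of $x$ attaining the maximal exponent, the Selberg--D\'elange theorem, the residue computation of the constant, and positivity from Lemma~\ref{loc fou prop}. The two difficulties you flag at the end are exactly the points the paper treats most lightly; in particular, the strict inequality $\varpi(k,H,f,x)<\varpi(k,G,f,x)$ for proper $H\subsetneq G$ is justified in the paper only by asserting $|\Hom(\FF_v^{\times},H)|<|\Hom(\FF_v^{\times},G)|$, which is not strict at every $v$ (for instance $G=\ZZ/4\ZZ$, $H=\ZZ/2\ZZ$, $q_v\equiv 3\bmod 4$ gives equality) and may fail to be strict precisely where $f(\mathfrak{p}_v)\neq 0$, so your worry about whether proper-subgroup contributions genuinely drop a power of $\log B$ reads the argument correctly.
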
 It is important to note that the leading constant in Theorem \ref{main thm} is different to the leading constant in Theorem \ref{short main thm}. Here we are counting by $G$-extensions whereas in Theorem \ref{short main thm} we are counting by field extensions. However forgetting the isomorphism only changes the counting results by a factor of $|\text{Aut}(G)|$.

\subsection{Asymptotic for the counting function $N(k,G,B,f)$}

From hereon we will assume that the frobenian multiplicative function $f$ is real valued and non-negative.

\begin{prop}[Analytic properties of the Fourier transform]\label{analytic prop}

Let $x \in \mathcal{O}_S^{\times} \otimes H^{\wedge}$. Then \[\hat{f}_{H}(x;s) = \zeta_k(s)^{\varpi(k,H,f,x)}G(x;s) \] where $G(x;s)$ is holomorphic in the region \begin{equation} \label{hol region}
    \re(s) > 1 - \frac{c}{\log(|\im(s)|+3)}
\end{equation} for some $c<\frac{1}{4}$, and \begin{equation}
    G(x;s) \ll (|\im(s)|+1)^{\frac{1}{2}}. \label{bound}
\end{equation} Furthermore, \[\lim_{s \rightarrow 1}(s-1)^{\varpi(k,H,f,x)}\hat{f}_{H}(x;s) = (\Res_{s=1} \zeta_k (s))^{\varpi(k,H,f,x)} \prod_{v}\frac{\hat{f}_{v, H}(x_v;s)}{\zeta_{k,v}(1)^{\varpi(k,H,f,x)}}.\] 
    
\end{prop}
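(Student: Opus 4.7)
The plan is to exploit the Euler product decomposition
\[\hat{f}_{H}(x;s) = \prod_v \hat{f}_{v,H}(x_v;s)\]
from (3.1) and split the product into the two ranges $v\notin S$ and $v\in S$. The unramified range will supply the $\zeta_k(s)^{\varpi(k,H,f,x)}$ factor via Proposition 2.3, and the ramified range will only contribute a harmless holomorphic factor by Lemma 3.5.

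First I would treat $v\notin S$. By Lemma 3.6 we have
\[\hat{f}_{v,H}(x_v;s) = 1 + \lambda_x(v)\, q_v^{-s},\]
where $\lambda_x$ is the function introduced in Lemma 3.7 and is $S$-frobenian with mean $\varpi(k,H,f,x)$. Because $|f(\mathfrak{p}_v)|$ is bounded by a constant depending only on $H$ and $|\mathrm{Hom}(\FF_v^{\times},H)|\le |H|$, the hypothesis $|\lambda_x(v)|<q_v$ of Proposition 2.3 is satisfied for all $v\notin S$ (after enlarging $S$ by finitely many small primes, which is allowed by the assumptions on $S$ in $\S 3.1$). Applying Proposition 2.3 to $\lambda_x$ yields
\[\prod_{v\notin S}\bigl(1+\lambda_x(v)q_v^{-s}\bigr) = \zeta_k(s)^{\varpi(k,H,f,x)}\, G_1(x;s),\]
with $G_1(x;s)$ holomorphic in the region (\ref{hol region}) and satisfying the growth bound (\ref{bound}).

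Next I would deal with $v\in S$. By Lemma 3.5, each factor $\hat{f}_{v,H}(x_v;s)$ is a finite sum of terms of the form (constant)$\cdot\Phi_v(\chi_v)^{-s}$ with $\Phi_v(\chi_v)\ne 0$, so each is entire in $s$ (except possibly branch points coming from the complex exponential), and is bounded by a constant depending on $H$ on $\re(s)\ge 0$. Since $S$ is finite, the product
\[G_2(x;s) := \prod_{v\in S}\hat{f}_{v,H}(x_v;s)\]
is holomorphic in the region (\ref{hol region}) and uniformly bounded there. Setting $G(x;s) := G_1(x;s)\,G_2(x;s)$ gives the desired factorisation, with the required holomorphy and the bound $G(x;s)\ll (|\im(s)|+1)^{1/2}$ inherited from $G_1$.

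Finally, for the limit formula I would take $s\to 1$ in the product $\zeta_k(s)^{\varpi(k,H,f,x)}G_1(x;s)G_2(x;s)$. The residue formula in Proposition 2.3 gives
\[\lim_{s\to 1}(s-1)^{\varpi(k,H,f,x)}\!\!\prod_{v\notin S}\bigl(1+\lambda_x(v)q_v^{-s}\bigr) = (\mathrm{Res}_{s=1}\zeta_k(s))^{\varpi(k,H,f,x)}\!\!\prod_{v\notin S}\frac{\hat{f}_{v,H}(x_v;1)}{\zeta_{k,v}(1)^{\varpi(k,H,f,x)}}\!\!\prod_{v\in S}\frac{1}{\zeta_{k,v}(1)^{\varpi(k,H,f,x)}},\]
where I used $1+\lambda_x(v)q_v^{-1}=\hat{f}_{v,H}(x_v;1)$. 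Multiplying by $G_2(x;1) = \prod_{v\in S}\hat{f}_{v,H}(x_v;1)$ folds the $v\in S$ factors into the product, yielding the stated single product over all $v$.

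The only genuinely subtle point is the verification that $\lambda_x$ satisfies the hypotheses of Proposition 2.3 (in particular $|\lambda_x(v)|<q_v$ and $S$-frobenianity); once Lemma 3.6 and Lemma 3.7 are in hand, both are essentially bookkeeping, and the rest is a clean application of the Euler product formalism developed in $\S 2$.
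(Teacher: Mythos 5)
Your proof is correct and follows exactly the same route as the paper: Euler product split at $S$, application of Proposition 2.3 to the $v\notin S$ factors (using $\hat{f}_{v,H}(x_v;s)=1+\lambda_x(v)q_v^{-s}$ from Lemma \ref{loc fou} and Lemma \ref{lambda}), absorbing the finitely many bounded holomorphic $v\in S$ factors into $G(x;s)$ via Lemma \ref{loc fou prop}, and then folding them back into the residue formula. The only substantive addition over the paper's version is your explicit remark that $|\lambda_x(v)|<q_v$ may require enlarging $S$ by finitely many small primes, which is a reasonable point the paper elides.
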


\begin{proof}
    For $v \not\in S$, the local Fourier transforms $\hat{f}_{v,H}(x_v;s)$ can be written $\hat{f}_{v,H}(x_v;s) = 1 +\lambda_x(v)q_v^{-s}$ where $\lambda$ is the $S$-frobenian function defined in Lemma \ref{lambda}. Consider \[F(s) = \prod_{v \not\in S}\hat{f}_{v,H}(x_v;s).\] As $\lambda$ satisfies the requirements of Proposition \ref{prop 2.3}, we can apply it to $F(s)$. From this application of the proposition we obtain the expression for $F(s)$ in terms of the Dedekind zeta function \[F(s) = \zeta_k(s)^{\varpi(k,H,f,x)}\mathcal{H}(x;s).\] Here $\mathcal{H}(x;s)$ is some function that is holomorphic in the region (\ref{reg}), for some constant $c=c_{\mathcal{H}} <  \frac{1}{4}$ and satisfies the bound (\ref{bound}). By Lemma \ref{loc fou prop}, we know that the Euler factors $\hat{f}_{v, H}(x_v;s)$ for $v \in S$ satisfy $\hat{f}_{v, H}(x_v;s) \ll_H 1$. We can multiply the function $\mathcal{H}(x;s)$ by the local Fourier transforms $\hat{f}_{v, H}(x_v;s)$ for each $v \in S$ to obtain a function $G(x;s)= \prod_{v \in S} \hat{f}_{v, H}(x_v;s) \mathcal{H}(x;s)$ that satisfies the bound (\ref{bound}) and is holomorphic in a region of the form (\ref{reg}) (up to possible branch point singularities) but possibly for a smaller constant $c < \frac{1}{4}$. This gives the expression \[\hat{f}_{H}(x;s) = \zeta(s)^{\varpi(k,H,f,x)}G(x;s), \] proving the first part. Also by Proposition \ref{prop 2.3}, we know that 
    \begin{align*}
        \lim_{s \rightarrow 1}(s-1)^{\varpi(k,H,f,x)}F(s) = (\Res_{s=1} \zeta_k (s))^{\varpi(k,H,f,x)} &\prod_{v \not\in S}\frac{1+\lambda_x(v)q_v^{-1}}{\zeta_{k,v}(1)^{\varpi(k,H,f,x)}} \\
        & \times \prod_{v \in S} \frac{1}{\zeta_{k,v}(1)^{\varpi(k,H,f,x)}},\end{align*} and the explicit expression follows after multiplying both sides by the Euler factors $\hat{f}_{v, H}(x_v;s)$ for $v \in S$. The fact that this limit is non-zero at $x=1$ also follows from Lemma \ref{loc fou prop}, as when $f$ is real valued and non-negative and $s \in \RR$, the local Fourier transforms satisfy $\hat{f}_{v,H}(1;s)>0$. \end{proof} By Lemma \ref{loc fou}, for every subgroup $H$, we can expand each of the $\hat{f}_{H}(x;s)$ into Dirichlet series \begin{equation} \label{dirichlet exp}
     \hat{f}_{H}(x;s) = \sum_{n \geq 1}\frac{a_{n}(H,x)}{n^{s}}. \end{equation} Furthermore, in Proposition \ref{analytic prop} we have shown that $\hat{f}_{H}(x;s)$ has a pole of order $\varpi(k,H,f,x)$ at $s=1$ and a meromorphic continuation to the left of the line $\re(s) = 1$. This means we are now in the position to apply a tauberian theorem due to Selberg and D\'elange in the form of \cite[Thm.~$5.2$]{Tenenbaum1995IntroductionTA} in order to find an asymptotic expression for each Dirichlet coefficient $a_n(H,x)$. We will then use the Poisson summation formula (\ref{poi sum}) to relate this asymptotic back to the Dirichlet coefficients $f_n$, giving the desired result.
\begin{lem}
     Let $x \in \mathcal{O}_S^{\times} \otimes H^{\wedge}$ and $a_n(H,x)$ be the Dirichlet coefficient of the global Fourier transform $\hat{f}_{H}(x;s)$. Then we have
    \[\sum_{n \leq B} a_n(H,x) = c_{k,H,f,x} B(\log B)^{\varpi(k,H,f,x)-1}+ O(B(\log B)^{\varpi(k,H,f,x)-2})\] where \[c_{k,H,f, x} = \frac{1}{\Gamma(\varpi(k,H,f,x))}\lim_{s \rightarrow 1}(s-1)^{\varpi(k,H,f,x)} \hat{f}_{H}(x;s).\]
\end{lem}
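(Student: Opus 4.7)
The strategy is to apply the Selberg--Délange theorem as formulated in \cite[Thm.~$5.2$]{Tenenbaum1995IntroductionTA} to the Dirichlet series $\hat{f}_{H}(x;s) = \sum_{n \geq 1} a_n(H,x) n^{-s}$. This theorem takes as input a Dirichlet series that factorises as $\zeta_k(s)^{\alpha} G(s)$ with $G$ holomorphic and satisfying polynomial growth in a standard zero-free region to the left of $\re(s)=1$, and returns an asymptotic of exactly the shape claimed, with leading constant $\frac{1}{\Gamma(\alpha)}\lim_{s\to 1}(s-1)^{\alpha} \hat{f}_{H}(x;s)$.

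First, I would invoke Proposition \ref{analytic prop} to produce precisely such a factorisation $\hat{f}_{H}(x;s) = \zeta_k(s)^{\varpi(k,H,f,x)} G(x;s)$, where $G(x;s)$ is holomorphic in the region $\re(s) > 1 - c/\log(|\im(s)|+3)$ for some $c<\tfrac{1}{4}$ and satisfies the bound $G(x;s) \ll (|\im(s)|+1)^{1/2}$. The only care needed here is the treatment of the exponent $\varpi(k,H,f,x)$: because $f$ is real valued and non-negative and the conductor data $\lambda_x(v)$ of Lemma \ref{lambda} is real, the exponent is a real number, so the Selberg--Délange hypothesis concerning a complex power of $\zeta_k$ is legitimate (one may if necessary cite the Dedekind zeta function version of the theorem rather than the Riemann version, which is the content of \cite[Thm.~$5.2$]{Tenenbaum1995IntroductionTA} generalised to arbitrary number fields).

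Next, I would feed this factorisation into the Selberg--Délange machine. The output is
\[
\sum_{n \leq B} a_n(H,x) = c_{k,H,f,x}\, B(\log B)^{\varpi(k,H,f,x)-1} + O\bigl(B(\log B)^{\varpi(k,H,f,x)-2}\bigr),
\]
with the leading constant identified as
\[
c_{k,H,f,x} = \frac{G(x;1)(\Res_{s=1}\zeta_k(s))^{\varpi(k,H,f,x)}}{\Gamma(\varpi(k,H,f,x))} = \frac{1}{\Gamma(\varpi(k,H,f,x))}\lim_{s\to 1}(s-1)^{\varpi(k,H,f,x)} \hat{f}_{H}(x;s),
\]
the second equality being immediate from the factorisation. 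This matches the statement exactly.

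The main obstacle is really just ensuring the hypotheses of the Selberg--Délange theorem are met in the number field setting; these are packaged in Proposition \ref{analytic prop} (holomorphy in a de la Vallée Poussin type region together with polynomial growth along vertical lines), so the work has already been done. A secondary point to check is the non-negativity of the Dirichlet coefficients $a_n(H,x)$ is \emph{not} required for the Selberg--Délange theorem in Tenenbaum's formulation — only the analytic properties of the generating series are used — so no additional positivity argument is needed here. Once these verifications are in place the conclusion is immediate.
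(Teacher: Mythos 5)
Your overall strategy is the same as the paper's (apply Selberg--D\'elange to the factorisation $\hat{f}_H(x;s)=\zeta_k(s)^{\varpi}G(x;s)$ produced by Proposition \ref{analytic prop}), but there are two concrete gaps.

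First, you wave away the passage from the Dedekind zeta function $\zeta_k$ to the Riemann zeta function $\zeta$. Tenenbaum's Theorem~$5.2$ is stated for $\zeta$, not $\zeta_k$, so one cannot simply ``cite the Dedekind zeta function version'' without either locating such a version or converting the factorisation. The paper does the latter: it invokes the argument of \cite[Lem.~$3.17$]{Frei_2022} to rewrite $\hat{f}_H(x;s)$ in the form $\zeta(s)^{\varpi}H(x;s)$ with $H$ holomorphic in the region (\ref{hol region}) and satisfying $H(x;s)\ll(|\im(s)|+3)^{3/4}$ there. This is a genuine step, not a formality: one must absorb $\zeta_k(s)/\zeta(s)^{[k:\QQ]}$ (and the correction to the exponent) into the holomorphic factor while preserving the polynomial growth bound.

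Second, and more importantly, your claim that ``non-negativity of the Dirichlet coefficients $a_n(H,x)$ is not required for the Selberg--D\'elange theorem in Tenenbaum's formulation --- only the analytic properties of the generating series are used'' is incorrect. Tenenbaum's Theorem~$5.2$ requires, as one of its hypotheses, a non-negative majorising sequence $(b_n)$ with $|a_n|\le b_n$ whose Dirichlet series satisfies an analogous factorisation; this is needed to control the contribution away from $s=1$ in the contour argument. The coefficients $a_n(H,x)$ are genuinely complex for $x\neq 1$ (the local factors involve characters $\langle\chi_v,x_v\rangle$), so this hypothesis is not vacuous. The paper supplies it explicitly by taking $(b_n)=a_n(H,1)$ and observing that $|a_n(H,x)|\le a_n(H,1)$, which follows from the explicit form of the local Fourier transforms in Lemma \ref{loc fou} together with the non-negativity of $f$. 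Without this choice your application of the theorem is not justified.
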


\begin{proof}
Write $\varpi= \varpi(k,H,f,x)$.
    We have shown that $\hat{f}_{ H}(x;s) = \zeta_k(s)^{\varpi}G(x;s)$, where $G(x;s)$ is holomorphic in a region of the form (\ref{hol region}) and satisfies the bound (\ref{bound}). We use the argument from \cite[Lem.~$3.17$]{Frei_2022} to rewrite $\hat{f}_H(x;s)$ in the form $\zeta(s)^{\varpi}H(x;s)$ where $H(x;s)$ is holomorphic in the region (\ref{hol region}) and satisfies $H(x;s) \ll (|\im(s)|+3)^{\frac{3}{4}}$ in this region. We may then apply the Selberg-D\'elange method as in \cite[Thm.~$5.2$]{Tenenbaum1995IntroductionTA}. (In particular we set $N=0$ and the $(b_n)$ to be $a_n(H,1)$, which clearly satisfy $|a_n(H,x)| < |a_n(H,1)|$, from the definition of $\hat{f}_{v, H}(x_v;s)$.)
\end{proof} We now need to make sure that the only subgroup of $G$ that contributes to the leading term of the asymptotic is $G$ itself. \begin{lem}
    Let $H$ be a proper subgroup of $G$. Then if $f$ is real valued and non-negative, we have $\varpi(k,H,f,x) < \varpi(k,G,f,x)$.
\end{lem}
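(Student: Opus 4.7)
The plan is to reduce the statement to the case $x=1$ and then express the difference of the two exponents as an integral over $\Gamma_k$ whose integrand is pointwise non-negative, with strict positivity detected via Chebotarev.

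First I would apply Proposition~\ref{inv ld} to obtain $\varpi(k, H, f, x) \leq \varpi(k, H, f, 1) = \varpi(k, H, f)$, and similarly $\varpi(k, G, f, x) \leq \varpi(k, G, f)$ with equality at $x=1$. It is thus enough to prove $\varpi(k, H, f) < \varpi(k, G, f)$. Writing $e_H$, $e_G$ for the exponents of $H$ and $G$ and $\psi_H$, $\psi_G$ for the class functions of the frobenian functions $d_H$, $d_G$ from \S\ref{exp section}, I would express
\[
\varpi(k, G, f) - \varpi(k, H, f) = \int_{\gamma \in \Gamma_k} \bigl(|G[\psi_G(\gamma)]| - |H[\psi_H(\gamma)]|\bigr) \varphi(\gamma) \, d\gamma,
\]
where $\varphi$ is the class function of $f$. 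The inclusion $H \hookrightarrow G$ induces an injection $\Hom(\FF_v^{\times}, H) \hookrightarrow \Hom(\FF_v^{\times}, G)$, hence $|H[\psi_H(\gamma)]| \leq |G[\psi_G(\gamma)]|$ pointwise on $\Gamma_k$. Since $f \geq 0$ on primes, Chebotarev and continuity give $\varphi \geq 0$ on $\Gamma_k$, so the integrand is pointwise non-negative.

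For strict positivity I would exhibit a subset of positive Haar measure on which the integrand is strictly positive. Restricting to those $\gamma$ whose image in $\Gal(k(\zeta_{e_G})/k)$ is trivial forces $\psi_G(\gamma) = e_G$ and $\psi_H(\gamma) = e_H$, so $|G[\psi_G(\gamma)]| = |G| > |H| = |H[\psi_H(\gamma)]|$ because $H \subsetneq G$. This set has measure $1/[k(\zeta_{e_G}):k] > 0$. The main obstacle is the final compatibility step: simultaneously arranging $\varphi(\gamma) > 0$ inside this Chebotarev set. In the linearly disjoint case of Proposition~\ref{inv ld} this is immediate from the product decomposition of the integral; in general I would apply Chebotarev on the compositum $K \cdot k(\zeta_{e_G})$ and use that positivity of $m(f)$ forces $\varphi > 0$ on a subset of $\Gal(K/k)$ of positive measure, so that the two conditions can be realised together. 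If $m(f) = 0$ the inequality degenerates to $0 \leq 0$, but then the corresponding leading constant in Theorem~\ref{main thm} vanishes, so this degenerate case does not affect the main asymptotic.
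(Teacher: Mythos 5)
Your proposal takes a genuinely different route from the paper's one-line termwise comparison of $\lambda_x$ for $H$ and for $G$, and you are right to flag the compatibility step as the delicate point; but there are two gaps, and the second is fatal as the argument stands.

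First, the reduction to $x=1$ does not give the stated inequality. Proposition~\ref{inv ld} yields $\varpi(k,H,f,x) \leq \varpi(k,H,f)$; combining this with a hoped-for $\varpi(k,H,f) < \varpi(k,G,f)$ produces $\varpi(k,H,f,x) < \varpi(k,G,f)$, i.e.\ the right-hand side is evaluated at $x=1$, not at the given $x$. Since $\varpi(k,G,f,x)$ can be strictly smaller than $\varpi(k,G,f)$, you have not proved $\varpi(k,H,f,x) < \varpi(k,G,f,x)$. (What you have proved happens to be what Proposition~\ref{asympt form} actually uses, but it is not the lemma as written.)

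Second, and more importantly, the compatibility step cannot be closed under the hypotheses you invoke, because the inequality is actually \emph{false} for some nonnegative $f$ with $m(f)>0$. Take $k=\QQ$, $G=\ZZ/3\ZZ$, $H=\{1\}$, and let $f$ be the indicator function of ideals all of whose prime factors satisfy $q_v \equiv 2 \bmod 3$; this is $S$-frobenian with $K=\QQ(\zeta_3)$ and $m(f)=\tfrac12$. Then $\lambda'(v)=(|\Hom(\FF_v^\times,G)|-1)f(\PR_v)$ vanishes identically: if $q_v\equiv 2\bmod 3$ then $|\Hom(\FF_v^\times,\ZZ/3\ZZ)|=1$, while if $q_v\equiv 1\bmod 3$ then $f(\PR_v)=0$. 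So $\varpi(\QQ,G,f)=0=\varpi(\QQ,H,f)$, and strictness fails. The precise failure in your argument is that $m(f)>0$ gives $\varphi>0$ on a positive-measure subset of $\Gal(K/k)$, but that subset may be disjoint from $\Gal(Kk(\zeta_{e_G})/k(\zeta_{e_G}))$ when $K$ and $k(\zeta_{e_G})$ share a subfield, which is exactly what happens above. The paper's own proof --- ``clearly $|\Hom(\FF_v^\times,H)|<|\Hom(\FF_v^\times,G)|$'' --- has the same defect, since that inequality is an equality for a positive density of $v$ and may be strict only where $f(\PR_v)=0$. The missing hypothesis is positivity of $\varphi$ somewhere on $\Gal(Kk(\zeta_{e_G})/k(\zeta_{e_G}))$ (equivalently, positive correlation of $f$ with the condition $q_v\equiv 1 \bmod e_G$); this follows from $m(f)>0$ in the linearly disjoint case, but in general it is an additional assumption that your proposal does not establish.
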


\begin{proof}
    For $H \subset G$ a proper subgroup, clearly $|\Hom(\FF_v^{\times}, H)| < |\Hom(\FF_v^{\times}, G)|$. It the follows from the definition of $\lambda_x$.
\end{proof}

\begin{prop}[Asymptotic Formula]\label{asympt form}

Let $ x \in \mathcal{O}_S^{\times} \otimes G^{\wedge}$ and write $\varpi = \varpi(k,G,f)$, where $\varpi(k,G, f)$ is defined as in $\S 3.4$. Then \begin{equation} \label{asymp}
    N(k, G, B, f) = c_{k,G, f} B(\log B)^{\varpi-1}(1+o(1)) \end{equation} where \begin{equation} \label{lead const 1}c_{k,G,f} = \frac{1}{\Gamma(\varpi)|\mathcal{O}_k^{\times}\otimes G^{\wedge}|}\sum_{\substack{\varpi = \varpi(k,G,f,x)\\ x \in \mathcal{O}_S^{\times} \otimes G^{\wedge}}}\lim_{s \rightarrow 1}(s-1)^{\varpi}\hat{f}_{G}(x;s).\end{equation}
\end{prop}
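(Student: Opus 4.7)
The plan is to start from the Poisson summation formula of Lemma~\ref{poi sum} and convert it, via the Selberg--Dél\'ange estimate just established, into an asymptotic for the sum over all continuous homomorphisms $\Hom(\AAA^\times/k^\times, G)$. I would then strip off the surjectivity condition to recover $N(k,G,B,f)$, controlling the resulting error via the strict inequality $\varpi(k,H,f,x)<\varpi(k,G,f,x)$ for proper subgroups $H\subsetneq G$ proved in the previous lemma.

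More precisely, I would first write
\[
\sum_{\substack{\Phi(\chi)\le B\\ \chi \in \Hom(\AAA^\times/k^\times,G)}} f(\chi) \;=\; \sum_{n\le B}\,\sum_{x\in\mathcal{O}_S^\times\otimes G^\wedge}\frac{a_n(G,x)}{|\mathcal{O}_k^\times\otimes G^\wedge|},
\]
by applying Poisson summation and expanding each $\hat f_G(x;s)$ via the Dirichlet series (\ref{dirichlet exp}). Using the Selberg--Dél\'ange lemma termwise for each $x\in\mathcal{O}_S^\times\otimes G^\wedge$, and collecting,
\[
\sum_{\substack{\Phi(\chi)\le B\\ \chi \in \Hom(\AAA^\times/k^\times,G)}} f(\chi) \;=\; \frac{B}{|\mathcal{O}_k^\times\otimes G^\wedge|}\sum_{x\in\mathcal{O}_S^\times\otimes G^\wedge} c_{k,G,f,x}(\log B)^{\varpi(k,G,f,x)-1}\bigl(1+o(1)\bigr),
\]
where $c_{k,G,f,x}=\Gamma(\varpi(k,G,f,x))^{-1}\lim_{s\to1}(s-1)^{\varpi(k,G,f,x)}\hat f_G(x;s)$. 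By Proposition~\ref{inv ld} we have $\varpi(k,G,f,x)\le \varpi(k,G,f,1)=\varpi$, with equality precisely for the $x$ appearing in the final formula; the remaining $x$ contribute terms of strictly smaller order and so are absorbed into $(1+o(1))$.

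Next, I would remove the surjectivity condition. Writing
\[
N(k,G,B,f)\;=\;\sum_{\substack{\Phi(\chi)\le B\\ \chi \in \Hom(\AAA^\times/k^\times,G)}} f(\chi)\;+\;O\!\left(\sum_{H\subsetneq G}\sum_{\substack{\Phi(\chi)\le B\\ \chi \in \Hom(\AAA^\times/k^\times,H)}} f(\chi)\right),
\]
I apply the same Poisson--summation-plus-Selberg--Dél\'ange argument to each proper subgroup $H$. By the previous lemma we have $\varpi(k,H,f,x)<\varpi(k,G,f)$ uniformly in $x$, so each inner sum is $O\bigl(B(\log B)^{\varpi-1-\eta}\bigr)$ for some $\eta>0$, and since the number of subgroups is finite the total error is absorbed into the main term. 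Combining this with the previous display yields (\ref{asymp}) with leading constant (\ref{lead const 1}).

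The main obstacle I anticipate is ensuring uniformity of the Selberg--Dél\'ange error term across the finitely many $x\in\mathcal{O}_S^\times\otimes G^\wedge$ and across the subgroups $H\subseteq G$; this follows from the uniform bound $\hat f_{v,H}(x_v;s)\ll_H 1$ of Lemma~\ref{loc fou prop} together with the uniform holomorphy region (\ref{hol region}) of Proposition~\ref{analytic prop}, but care must be taken that the auxiliary function $H(x;s)$ produced in the previous lemma (after extracting the zeta factor) satisfies the growth bound $H(x;s)\ll(|\im(s)|+3)^{3/4}$ uniformly in $x$, so that a single application of \cite[Thm.~$5.2$]{Tenenbaum1995IntroductionTA} suffices. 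A minor additional point, to be handled separately, is the degenerate case $\varpi\in\{0,-1,-2,\dots\}$ where $\Gamma(\varpi)$ has a pole: then the leading constant in (\ref{lead const 1}) vanishes and the estimate simply says $N(k,G,B,f)=o(B(\log B)^{\varpi-1})$.
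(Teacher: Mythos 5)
Your proof is correct and follows essentially the same route as the paper: Poisson summation converts $D_f(s)$ into a finite sum of $\hat f_H(x;s)$, the Selberg--Dél\'ange lemma already established gives the termwise asymptotic for each Dirichlet coefficient $a_n(H,x)$, and the strict inequality $\varpi(k,H,f,x)<\varpi(k,G,f)$ for proper $H\subsetneq G$ (together with $\varpi(k,G,f,x)\le\varpi(k,G,f)$) isolates the leading term coming from $G$ itself and from $x$ achieving the maximal exponent. Your extra remarks on uniformity of the error term over the finitely many $x$ and the finitely many subgroups, and on the degenerate case $\varpi\in\{0,-1,-2,\dots\}$ where $1/\Gamma(\varpi)=0$ and the statement reduces to an $o(\cdot)$ bound, are correct and slightly more careful than the paper's own terse write-up, which leaves these points implicit.
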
 

\begin{proof}
Let $f_n$ be the coefficients of the Dirichlet series $D_f(s)$. By the Poisson summation formula (\ref{poi sum}), for every $H\subseteq G$, the Dirichlet series $D_f(s)$ satisfies \[
        D_f(s) = \frac{1}{|\mathcal{O}_k \otimes H^{\wedge}|} \sum_{x \in \mathcal{O}_S^{\times}\otimes H^{\wedge}}\hat{f}_{H}(x;s),\] and so by the expansion of $\hat{f}_{H}(x;s)$ into the Dirichlet series (\ref{dirichlet exp}), the individual Dirichlet coefficients $f_n$ of $D_f(s)$ each satisfy  \[f_n = \frac{1}{|\mathcal{O}_k^{\times} \otimes H^{\wedge}|}\sum_{x \in \mathcal{O}_S^{\times} \otimes H^{\wedge} } a_n(H,x).\] Then since the only subgroup contributing to the leading term is $G$ itself and $N(k,G,B,f) = \sum_{n \leq B}f_n$, the result follows.
\end{proof}

\subsection{Leading constant of the asymptotic formula}

In this section we will prove the explicit expression for the leading constant $c_{k,G,f}$ of (\ref{asymp}) in Proposition \ref{asympt form}. To do this we want to consider which $x \in \mathcal{O}_S^{\times} \otimes G^{\wedge}$ contribute to the leading singularity in (\ref{lead const 1}). We will assume $m(f) \neq 0$. Let $\mathcal{X}(k, G, f)$ be the set \begin{align*}
    \mathcal{X}(k,G, f) =  \{x \in k^{\times} \otimes G^{\wedge} : \:  & \text{for all but finitely many places with } \: f(\PR_v) \neq 0 , \\
     & \: \text{we have} \: x_v = 1 \in k_v^{\times} \otimes G^{\wedge} \}\end{align*} We wish to write the sum in (\ref{lead const 1}) as a sum over the elements of $\mathcal{X}(k, G, f)$; to do so we must first prove $\mathcal{X}(k, G, f)$ is finite. In the proof of this we will use the following lemma. \begin{lem} \label{gal}
    Let $x \in k^{\times} \otimes G^{\wedge}$, let $q$ be a prime power and $\zeta_q$ be a $q$th root of unity. The group $\Gamma : = \Gal(k(\sqrt[q]{x}, \zeta_q)/k)$ is finite and can be naturally embedded into the semidirect product \[\ZZ/q\ZZ \rtimes (\ZZ/q\ZZ)^{\times}.\] 
\end{lem}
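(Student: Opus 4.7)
The plan is to set up the extension via a chosen representative of $x$ in $k^{\times}$, bound its degree to conclude finiteness, and then write down an explicit map $\Gamma \to \ZZ/q\ZZ \rtimes (\ZZ/q\ZZ)^{\times}$ using the standard Kummer-cyclotomic cocycle.

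First I would unpack the notation $\sqrt[q]{x}$: since $G^{\wedge}$ is a finite abelian group, any $x \in k^{\times} \otimes G^{\wedge}$ is represented (after fixing a cyclic decomposition of $G^{\wedge}$) by an element $\tilde{x} \in k^{\times}$, well defined modulo $k^{\times q}$, and any two such lifts give the same field $k(\sqrt[q]{\tilde{x}})$ because multiplying $\tilde{x}$ by a $q$-th power does not enlarge the extension. Then $k(\sqrt[q]{x},\zeta_q)$ is interpreted as the splitting field of $T^q-\tilde{x}$ over $k$, which is manifestly Galois over $k$. Finiteness of $\Gamma$ is then immediate from the tower
\[
[k(\sqrt[q]{x},\zeta_q):k]\;\leq\;[k(\zeta_q):k]\cdot[k(\sqrt[q]{\tilde{x}},\zeta_q):k(\zeta_q)]\;\leq\;\varphi(q)\cdot q,
\]
the first factor because $k(\zeta_q)\subseteq \overline{k}$ sits inside a cyclotomic extension and the second because $\sqrt[q]{\tilde{x}}$ is a root of $T^q-\tilde{x}\in k(\zeta_q)[T]$.

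Next I would construct the embedding. For $\sigma\in\Gamma$, since $\sigma$ permutes the $q$-th roots of unity there is a unique $a(\sigma)\in(\ZZ/q\ZZ)^{\times}$ with $\sigma(\zeta_q)=\zeta_q^{a(\sigma)}$; and since $\sigma(\sqrt[q]{\tilde{x}})^q=\sigma(\tilde{x})=\tilde{x}=(\sqrt[q]{\tilde{x}})^q$, the ratio $\sigma(\sqrt[q]{\tilde{x}})/\sqrt[q]{\tilde{x}}$ is a $q$-th root of unity, so equals $\zeta_q^{b(\sigma)}$ for a unique $b(\sigma)\in\ZZ/q\ZZ$. Define
\[
\Phi:\Gamma\longrightarrow \ZZ/q\ZZ \rtimes (\ZZ/q\ZZ)^{\times},\qquad \sigma\longmapsto(b(\sigma),a(\sigma)).
\]
A direct computation
\[
(\sigma\tau)(\sqrt[q]{\tilde{x}})=\sigma\bigl(\zeta_q^{b(\tau)}\sqrt[q]{\tilde{x}}\bigr)=\zeta_q^{a(\sigma)b(\tau)+b(\sigma)}\sqrt[q]{\tilde{x}}
\]
together with the obvious multiplicativity of $a$ shows that $\Phi$ is a group homomorphism, matching the semidirect product law $(b,a)(b',a')=(b+ab',aa')$.

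Finally, injectivity is immediate: if $\Phi(\sigma)=(0,1)$ then $\sigma$ fixes both $\zeta_q$ and $\sqrt[q]{\tilde{x}}$, hence fixes the whole extension $k(\sqrt[q]{x},\zeta_q)$, so $\sigma=\id$. The main (mild) obstacle is simply making the interpretation of $\sqrt[q]{x}$ precise so that independence from the chosen lift is transparent; once this is done, the cocycle calculation is the standard Kummer–cyclotomic one, and nothing deeper is needed.
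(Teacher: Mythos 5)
Your proof is correct, and it takes a genuinely different and cleaner route than the paper. The paper's proof reduces to the case $\mu_q \not\subset k$, $x \neq 1$, exhibits an explicit generator $\sigma$ of $\Gal(k(\sqrt[q]{x},\zeta_q)/k(\zeta_q))$, notes this subgroup is normal (since $k(\zeta_q)/k$ is Galois), and then invokes a split short exact sequence $1 \to \ZZ/q\ZZ \to \Gamma \to (\ZZ/q\ZZ)^{\times} \to 1$ to identify $\Gamma$ with the semidirect product. You instead construct the map $\Gamma \to \ZZ/q\ZZ \rtimes (\ZZ/q\ZZ)^{\times}$ directly via the Kummer--cyclotomic cocycle $\sigma \mapsto (b(\sigma),a(\sigma))$, verify the homomorphism property by an explicit computation that matches the semidirect product law, and get injectivity because an element in the kernel fixes both generators of the field. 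Your approach buys several things: it needs no case distinction (the cases $\mu_q\subset k$ and $x=1$ are handled automatically), it does not rely on the subextensions having the maximal possible degrees, and it proves exactly the stated claim --- an \emph{embedding} --- which is all the paper's Lemma \ref{finite} actually uses; the paper's argument as written tacitly assumes $[k(\sqrt[q]{x},\zeta_q):k(\zeta_q)]=q$ and concludes with an isomorphism that need not hold in general. Your direct degree bound $[k(\sqrt[q]{x},\zeta_q):k]\le [k(\zeta_q):k]\cdot q$ is likewise a tidier proof of finiteness. The only point worth making explicit is the preliminary reduction to $G^{\wedge}=\ZZ/q\ZZ$, so that $x$ lives in $k^{\times}\otimes\ZZ/q\ZZ = k^{\times}/k^{\times q}$ and $\tilde{x}$ is a well-defined representative; this is exactly the reduction the paper makes at the start of the proof of Lemma \ref{finite}, so the two arguments are aligned there.
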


\begin{proof}
    Suppose $\mu_q \subset k$. Then $\Gamma = \Gal(k(\sqrt[q]{x})/k)$ which is equal to some cyclic group of order dividing $q$, which is clearly finite. Also, if $x=1$, then $\Gamma = \Gal(k(\zeta_q)/k)$. This Galois group is canonically isomorphic to $(\ZZ/q\ZZ)^{\times}$, and thus is finite. So suppose instead that $k$ does not contain $\mu_q$ and $x \neq 1$. The intermediate fields of $k(\sqrt[q]{x}, \zeta_q)$ are $K = k(\zeta_q)$ and $L = k(\sqrt[q]{x})$. The automorphism  \[\sigma: \zeta_q \mapsto \zeta_q, \:\ \sqrt[q]{x} \mapsto \zeta_q\sqrt[q]{x}\] of $k(\sqrt[q]{x}, \zeta_q)$ fixes $k(\zeta_q)$, is of order $q$ and in particular generates $\Gal(k(\sqrt[q]{x},\zeta_q)/k(\zeta_q))$. Furthermore, the field extension $k(\zeta_q)/k$ is Galois as $\text{char}(k) = 0$ and $t^q-1$ is separable over $k$. Thus $\Gal(k(\sqrt[q]{x}, \zeta_q)/k(\zeta_q))$ is a normal subgroup of $\Gamma$. Normal subgroups $N \trianglelefteq G$ induce split exact sequences of the form 

\begin{adjustbox}
{center}\begin{tikzcd}
1 \arrow[r] & N \arrow[r] & G \arrow[r] & G/N \arrow[r] & 1.
\end{tikzcd}
\end{adjustbox} 
By letting $G = \Gal(k(\sqrt[q]{x}, \zeta_q)/k)$ and $N = \Gal(k(\sqrt[q]{x}, \zeta_q)/k(\zeta_q))$, we get \[G/N = \Gal(k(\zeta_q)/k) \cong (\ZZ/q\ZZ)^{\times}\] and there is a split exact sequence 

\begin{adjustbox}
{center}\begin{tikzcd}
0 \arrow[r] & \ZZ/q\ZZ \arrow[r] & \Gamma \arrow[r] & (\ZZ/q\ZZ)^{\times} \arrow[r] & 0.
\end{tikzcd}
\end{adjustbox} 
Using the fact that semidirect products arise from split exact sequences, we obtain that $\Gamma \cong \ZZ/q\ZZ \rtimes (\ZZ/q\ZZ)^{\times}$, which is finite and of order $|\ZZ/q\ZZ | \cdot |(\ZZ/q\ZZ)^{\times}|$. 
\end{proof}

\begin{lem} \label{finite}
    The set\begin{align*}
    \mathcal{X}(k,G, f) =  \{x \in k^{\times} \otimes G^{\wedge} : \:  & \text{for all but finitely many places with }  \;  f(\PR_v) \neq 0 , \\
     & \: \text{we have} \: x_v = 1 \in k_v^{\times} \otimes G^{\wedge} \}\end{align*} is finite.
\end{lem}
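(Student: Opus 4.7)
The plan is to realise $\mathcal{X}(k,G,f)$ as a subset of a finite subgroup of $k^{\times} \otimes G^{\wedge}$. The set $\mathcal{X}(k,G,f)$ is itself a subgroup, since the local triviality condition $x_v = 1$ at each place is preserved by the group operation, and the ambient group $k^{\times} \otimes G^{\wedge}$ has exponent dividing $e := \exp(G)$. The first reduction I would perform is to the case $\zeta_e \in k$: setting $K' := k(\zeta_e)$ and decomposing $G^{\wedge} \cong \bigoplus_i \ZZ/e_i\ZZ$, the natural base-change map $\iota : k^{\times} \otimes G^{\wedge} \to (K')^{\times} \otimes G^{\wedge}$ has kernel injecting into $\bigoplus_i H^1(\Gal(K'/k), \mu_{e_i})$ via the standard Kummer cocycle, so this kernel is finite; it then suffices to show that $\iota(\mathcal{X}(k,G,f))$ is finite, and I may assume from now on that $\zeta_e \in k$.

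With $\zeta_e \in k$, for each $x = \sum_i x_i \in k^{\times} \otimes G^{\wedge}$ (with $x_i \in k^{\times}/(k^{\times})^{e_i}$), the Kummer extension $L_x := k(\sqrt[e_i]{x_i} : i)$ is Galois over $k$ of exponent dividing $e$, and the local condition $x_v = 1$ in $k_v^{\times} \otimes G^{\wedge}$ at a place $v$ unramified in $L_x$ is equivalent to $v$ splitting completely in $L_x$. Let $K/k$ be the finite Galois extension realising the class function $\varphi$ of $f$ and $C_f := \{\sigma \in \Gal(K/k) : \varphi(\sigma) \neq 0\}$, which is non-empty because $m(f) \neq 0$. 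Applying Chebotarev's density theorem in the compositum $KL_x/k$, the defining condition of $\mathcal{X}(k,G,f)$ becomes the purely group-theoretic statement that the full preimage of $C_f$ under $\pi : \Gal(KL_x/k) \twoheadrightarrow \Gal(K/k)$ lies in the normal subgroup $\Gal(KL_x/L_x)$.

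Writing $N := \Gal(KL_x/K)$ and $M := \Gal(KL_x/L_x)$, both normal in $\Gal(KL_x/k)$, a direct coset computation shows that $\pi^{-1}(C_f) \subseteq M$ if and only if $N \subseteq M$ (equivalently $L_x \subseteq K$) together with $C_f \subseteq M/N = \Gal(K/L_x)$. In particular, every $x \in \mathcal{X}(k,G,f)$ forces $L_x \subseteq K$, so each component $x_i$ must lie in the subgroup $H_i \subseteq k^{\times}/(k^{\times})^{e_i}$ corresponding under Kummer theory to the largest exponent-$e_i$ abelian subextension of $K$; since this subextension is finite over $k$, $H_i$ is finite. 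Hence $\mathcal{X}(k,G,f)$ is contained in the finite direct sum $\bigoplus_i H_i$ and is therefore finite. The main technical obstacle is the coset computation, which forces the key containment $L_x \subseteq K$ and must be carried out carefully to account for the possible overlap between $K$ and $L_x$.
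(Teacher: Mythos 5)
Your argument takes a genuinely different route from the paper's. The paper works entirely through the associated class functions: it introduces the frobenian indicator $\psi_{g_x}$ of the condition $x_v = 1 \in k_v^{\times} \otimes G^{\wedge}$, defined on $\Gal(k(\sqrt[q]{x},\zeta_q)/k)$, invokes a lemma of Loughran--Matthiesen to replace the defining condition of $\mathcal{X}(k,G,f)$ by the identity of means $m(fg_x) = m(f)$, and then decomposes $\varphi$ and $\psi_{g_x}$ into irreducible characters and argues by orthogonality that only finitely many $\psi_{g_x}$ can correlate with $\varphi$. You instead use Chebotarev together with a direct coset computation to force the field-theoretic containment $L_x \subseteq K$, after which Kummer theory immediately confines $x$ to the finite group attached to the maximal exponent-$e$ abelian subextension of $K/k$. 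When it goes through, your route is arguably cleaner and yields an explicit bound on $|\mathcal{X}(k,G,f)|$ rather than bare finiteness, and your coset computation is correct as stated: non-emptiness of $C_f$ (supplied by the running hypothesis $m(f) \neq 0$) is exactly what lets you pass from $\pi^{-1}(C_f) \subseteq M$ to $N \subseteq M$, since any single lift $\sigma_0$ of a $\sigma \in C_f$ lies in $M$ and $\sigma_0 N \subseteq M$ then gives $N = \sigma_0^{-1}(\sigma_0 N) \subseteq M$.

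However, the opening reduction to $\zeta_e \in k$ has a genuine gap. You pass to $K' := k(\zeta_e)$ and assert that it suffices to prove $\iota(\mathcal{X}(k,G,f))$ is finite, but you never say what membership condition $\iota(x)$ satisfies inside $(K')^{\times} \otimes G^{\wedge}$, nor whether the hypotheses of your later argument survive the base change. The set $T = \{v : f(\PR_v) \neq 0\}$ is a frobenian set of places of $k$; pulling it back to places $w$ of $K'$ can produce a set of density zero, and correspondingly the base-changed version of $C_f$, namely $C_f' := \{\sigma \in \Gal(KK'/K') : \sigma|_K \in C_f\}$, is non-empty precisely when $C_f$ meets $\Gal(K/K \cap K')$. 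This can fail: with $k = \QQ$, $G = \ZZ/3\ZZ$ and $f$ the indicator function of integers all of whose prime factors satisfy $p \equiv 2 \bmod 3$, one has $K = K' = \QQ(\zeta_3)$ and $C_f$ consisting of the nontrivial element of $\Gal(K/\QQ)$, so $C_f \cap \Gal(K/K \cap K') = \emptyset$. Your coset argument then imposes no constraint and does not force $L_x \subseteq K$. To repair the proof you would need either to work directly over $k$ with the more delicate frobenian condition ``$\Frob_v$ fixes a root of $t^{e_i}-x_i$'' on $\Gal(k(\sqrt[e_i]{x_i},\zeta_{e_i})/k)$, or to track carefully what $f$, $C_f$ and $m(f)$ become after base change to $K'$ and handle the degenerate case where the relevant density drops to zero.
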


\begin{proof}

    It is enough to prove this for $G^{\wedge} = \ZZ/q\ZZ$ when $q$ is a power of an odd prime. Let $g_x(v)$ be the indicator function for the condition $x_v = 1 \in k_v^{\times} \otimes G^{\wedge}$. Clearly this is an $S$-frobenian condition so $g_x$ is an $S$-frobenian function. This means that showing that $\mathcal{X}(k,G, f)$ is finite is equivalent to showing that if we have $f(\PR_v)g_x(v) = f(\PR_v)$, then the condition $f(\PR_v) \neq 0 $ implies $g_x(v) = 1$ holds only for finitely many $g_x$. In more generality, this is showing that a given $S$-frobenian function can only correlate with finitely many other $S$-frobenian functions. 

     For $G^{\wedge} = \ZZ/q\ZZ$, the condition $x_v = 1 \in k_v^{\times} \otimes G^{\wedge}$ is the same as $x_v \in k_v^{\times q}$. In particular if $g_x(v) = 1$, then there is a solution in $k_v$ to the polynomial $t^q - x$. The splitting field of this polynomial is $k(\sqrt[q]{x}, \zeta_q)$ where $\zeta_q$ is a primitive $q$th root of unity. Clearly the set $S$ contains all places ramifying in $k(\sqrt[q]{x}, \zeta_q)/k$, and thus we can take the field extension defining $g_x$ to be $ k(\sqrt[q]{x}, \zeta_q)$ and the class function corresponding to $g_x$ to be defined on $\Gamma := \Gal(k(\sqrt[q]{x}, \zeta_q)/k)$. We will denote this class function by $\psi_{g_x}$. By Lemma \ref{gal}, we know that the group $\Gal(k(\sqrt[q]{x}, \zeta_q)/k)$ embeds into a fixed finite group. Furthermore, we know that $f$ has class function $\varphi: \Gal(K/k) \rightarrow \CC$ and that the group $\Gal(K/k)$ is finite. 
By \cite[Lem.~$2.5$]{Loughran2019FrobenianMF} the statement $f(\PR_v)g_x(v) = f(\PR_v)$ for all but finitely many $v$ is equivalent to $m(fg_x) = m(f)$, where $m(fg_x)$ is the mean of $fg_x$. The class function for $fg_x$ is given by the product $\varphi \cdot \psi_{g_x}$ and defined on $\Gamma_{fg_x} := \Gal(k(\sqrt[q]{x}, \zeta_q)K/k)$. The Galois group $\Gamma_{fg_x} $ is isomorphic to a subgroup of $\Gal(k(\sqrt[q]{x}, \zeta_q)/k) \times \Gal(K/k)$, which itself is embedded into a fixed finite group by Lemma \ref{gal}, which we will denote $\Gamma_q$. In particular, the mean \[m(fg_x) = \frac{1}{|\Gamma_q|}\sum_{\gamma \in \Gamma_{q}} \varphi(\gamma)\psi_{g_x}(\gamma)\] is well defined. To show that $f(\PR_v)g_x(v) = f(\PR_v)$ holds for only finitely many $g_x$, we will show that $m(fg_x) = m(f)$ for only finitely many $g_x$. Since $\varphi$ and $\psi_{g_x}$ are class functions defined on finite groups, by character theory they have a decomposition into finitely many irreducible characters, given by \begin{equation}
\varphi = \sum_{\chi} c_{\chi}\chi
   \quad\mathrm{and}\quad 
\psi_{g_x}= \sum _{\eta}d_{\eta}\eta
\end{equation} and furthermore we can define \[ \langle \varphi, \psi_{g_x} \rangle := \frac{1}{|\Gamma_{q}|}\sum_{\gamma \in \Gamma_{q}} \varphi(\gamma)\psi_{g_x}(\gamma)\] using that $\overline{\psi_{g_x}} = \psi_{g_x}$.  After decomposing $\varphi$ and $\psi_{g_x}$ into their irreducible characters (we may need to sum over more characters for uniformising purposes as we are working over a larger Galois group) we obtain \begin{align*}
        \langle \varphi,\psi_{g_x} \rangle  = & \: \langle \sum_{\chi} c_{\chi}\chi , \sum _{\eta}d_{\eta}\eta \rangle \\
        = & \: c_{\chi_1}(d_{\eta_1} \langle \chi_1, \eta_1 \rangle + \cdots + d_{\eta_m} \langle \chi_1, \eta_m \rangle) + \cdots \\
        + & \: c_{\chi_n}(d_{\eta_1} \langle \chi_n, \eta_1 \rangle + \cdots + d_{\eta_m} \langle \chi_n, \eta_m \rangle).
    \end{align*}   By character orthogonality $\langle \chi_i, \eta_j \rangle = 1$ if $\chi_i = \eta_j$ and zero otherwise, which means that $\langle \varphi, \psi_g \rangle \neq 0$ if at least one irreducible character appears in the decomposition of both $\varphi$ and $\psi_{g_x}$. Since there are finitely many irreducible characters in the decomposition of $\varphi$, there can only be finitely many $\psi_{g_x}$ where at least one character $\chi$ of $\sum_{\chi} c_{\chi}\chi$ appears in the decomposition $\sum _{\eta}d_{\eta}\eta$ of $\psi_{g_x}$. Hence there are only finitely many $g_x$ with $m(fg_x) \neq 0$ and so only finitely many $g_x$ such that $f(\PR_v)g_x(v) = f(\PR_v)$ can hold. It follows that the set $\mathcal{X}(k,G,f)$ is finite.
  \end{proof}

\begin{lem}
    The set $\mathcal{X}(k,G,f)$ in Lemma \ref{finite} is equal to \begin{align*}
        \mathcal{X}(k, G, f) = \{x \in \mathcal{O}_S^{\times} \otimes G^{\wedge} : \: & \text{for all places $v \not\in S$ with} \: f(\PR_v) \neq 0, \\
        & \text{we have} \: x_v = 1 \in k_v^{\times} \otimes G^{\wedge} \}.\end{align*}
\end{lem}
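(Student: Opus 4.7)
The plan is to establish both inclusions. The easy direction ($\supseteq$) is immediate: the map $\mathcal{O}_S^{\times} \otimes G^{\wedge} \hookrightarrow k^{\times} \otimes G^{\wedge}$ is injective (the short exact sequence $0 \to \mathcal{O}_S^{\times} \to k^{\times} \to \bigoplus_{v\not\in S} \mathbb{Z} \to 0$ afforded by triviality of the class group of $\mathcal{O}_S$ stays short exact after $\otimes G^{\wedge}$, since $\bigoplus \mathbb{Z}$ is torsion-free), and the stronger condition ``$x_v = 1$ for every $v\not\in S$ with $f(\PR_v)\neq 0$'' trivially implies the weaker cofinite condition defining $\mathcal{X}(k,G,f)$ in Lemma \ref{finite}.

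For the nontrivial inclusion ($\subseteq$), fix $x \in \mathcal{X}(k,G,f)$. Decomposing $G^{\wedge}$ into its primary components, it suffices to treat the case $G^{\wedge} = \mathbb{Z}/q^n\mathbb{Z}$ for some prime power $q^n$; choose a representative $y \in k^{\times}$ for $x$. Following the setup in the proof of Lemma \ref{finite}, let $L = k(\sqrt[q^n]{y},\zeta_{q^n})$, let $\psi_{g_x}$ be the $\{0,1\}$-valued class function of $g_x(v) = \mathbf{1}[x_v = 1]$ on $\Gal(L/k)$, and let $\varphi$ be the class function of $f$ on $\Gal(K/k)$; both lift to $\Gal(M/k)$ where $M = LK$. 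By \cite[Lem.~2.5]{Loughran2019FrobenianMF} the hypothesis $x \in \mathcal{X}(k,G,f)$ is equivalent to $m(fg_x) = m(f)$, and non-negativity of $f$ together with $1-\psi_{g_x}\geq 0$ upgrades this equality to the pointwise identity
\[
\varphi(\sigma|_K)\bigl(1 - \psi_{g_x}(\sigma|_L)\bigr) = 0 \qquad \text{for every } \sigma \in \Gal(M/k).
\]

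It remains to prove (a) $x \in \mathcal{O}_S^{\times} \otimes G^{\wedge}$, and (b) $x_v = 1$ for \emph{every} $v\not\in S$ with $f(\PR_v)\neq 0$. Granted (a), the extension $L/k$ is unramified outside $S$ (using also that $S$ contains all places above $q$), so for every such $v$ we may plug $\sigma = \Frob_v$ into the pointwise identity to deduce (b). The main obstacle is (a), which I would prove by contradiction: suppose $v_{\mathfrak{p}}(y) \not\equiv 0 \pmod{q^n}$ for some $\mathfrak{p}\not\in S$, so that $k(\sqrt[q^n]{y})/k$ is ramified at $\mathfrak{p}$. The assumption $m(f)\neq 0$ supplies some $\tau \in \Gal(K/k)$ with $\varphi(\tau)\neq 0$; the pointwise identity then forces $\psi_{g_x}(\sigma|_L) = 1$ for \emph{every} $\sigma$ in the $\Gal(M/K)$-coset of lifts of $\tau$. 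Unpacking the condition $\psi_{g_x}(\sigma)=1$ (which, up to a harmless adjustment when $\zeta_{q^n}\not\in k$, becomes $\sigma|_{k(\sqrt[q^n]{y})} = \mathrm{id}$), this coset-wise vanishing forces $\Gal(M/K) \subseteq \Gal(M/k(\sqrt[q^n]{y}))$, equivalently $k(\sqrt[q^n]{y}) \subseteq K$. But then $k(\sqrt[q^n]{y})/k$ would be unramified outside the ramified places of $K/k$, all of which lie in $S$, contradicting ramification at $\mathfrak{p}$.
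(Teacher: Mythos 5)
Your $\supseteq$ direction is fine, and your proof of (b) given (a) is a genuinely different and nice argument compared to the paper's: the paper simply observes that the exceptional set $\{v \not\in S : f(\PR_v) \neq 0,\ x_v \neq 1\}$ is $S$-frobenian (as an intersection of $S$-frobenian sets), hence empty if finite, whereas you extract the pointwise identity $\varphi(\sigma|_K)\bigl(1-\psi_{g_x}(\sigma|_L)\bigr)=0$ from $m(fg_x)=m(f)$ using non-negativity and then specialise $\sigma = \Frob_v$. Both work and are essentially Chebotarev in different clothes.

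However, there is a genuine gap in your step (a), which the paper in fact does not attempt to prove: the paper's proof opens with ``we are summing over $x \in \mathcal{O}_S^\times \otimes G^\wedge$ in the Poisson sum, so we only need to consider these $x$,'' i.e.\ it treats the lemma as an equivalence of conditions on $\mathcal{O}_S^\times \otimes G^\wedge$ and never claims $\mathcal{X}(k,G,f) \subseteq \mathcal{O}_S^\times \otimes G^\wedge$. Your attempt to prove that inclusion hinges on the parenthetical ``harmless adjustment when $\zeta_{q^n}\not\in k$,'' which is not harmless. When $\zeta_{q^n}\not\in k$, the condition $\psi_{g_x}(\sigma)=1$ means ``$\sigma$ fixes \emph{some} $q^n$-th root of $y$,'' i.e.\ membership in the \emph{union} $\bigcup_{\rho}\rho\,\Gal\bigl(M/k(\sqrt[q^n]{y})\bigr)\rho^{-1}$ of conjugate subgroups, not in the single subgroup $\Gal\bigl(M/k(\sqrt[q^n]{y})\bigr)$. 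Your deduction that $\tilde\tau\,\Gal(M/K) \subseteq H$ forces $\Gal(M/K)\subseteq H$ uses that $H$ is a subgroup (then $\tilde\tau\in H$ and $N=\tilde\tau^{-1}\tilde\tau N\subseteq H$); a union of conjugates of a non-normal subgroup is not a subgroup, and a coset can lie inside it while meeting several distinct conjugates, so the argument does not deliver $k(\sqrt[q^n]{y})\subseteq K$ (nor even $k(\rho\sqrt[q^n]{y})\subseteq K$ for some fixed $\rho$) without further work. Since your (b) is logically downstream of (a), the $\subseteq$ inclusion is not established as written; you would need either to repair (a) or, following the paper, to sidestep it by interpreting the lemma on $\mathcal{O}_S^\times\otimes G^\wedge$ only.
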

\begin{proof}

We are summing over $x \in \mathcal{O}_S^{\times} \otimes G^{\wedge}$ in the Poisson sum, so we only need to consider these $x$. To show that the condition holds for all $v \not\in S$, let $x \in \mathcal{O}_S^{\times} \otimes G^{\wedge}$ and consider the sets $\{v \in \Omega_k : f(\PR_v) \neq 0\}$ and $\{v \in \Omega_k : x_v = 1 \in k_v^{\times} \otimes G^{\wedge}\}$. The first set is clearly $S$-frobenian as $f$ is, and the second is $S$-frobenian by \cite[Lem.~$3.12$]{Frei_2022}. The intersection of two $S$-frobenian sets is $S$-frobenian, and so the result follows. \end{proof}

\begin{lem}
    Let $x \in \mathcal{O}_S^{\times} \otimes G^{\wedge}$. Then $x \in \mathcal{X}(k, G, f)$ if and only if $\varpi(k,G,f,x)=\varpi(k,G,f)$. Furthermore for $x \in \mathcal{X}(k,G,f)$ we have that $\hat{f}_{v, G}(x_v;1)=\hat{f}_{v, G}(1;1)$ for all $v \not\in S$.
\end{lem}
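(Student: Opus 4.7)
The plan is to compare the two frobenian functions $\lambda_1$ and $\lambda_x$ directly, place by place, and then translate pointwise equality into equality of means via Chebotarev density.

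First I would record the key sign computation. By the explicit formula for $\lambda_x$ in Lemma \ref{lambda}, for every $v \notin S$ we have
\[\lambda_1(v) - \lambda_x(v) = \begin{cases} 0, & \text{if } x_v = 1, \\ |\Hom(\FF_v^{\times},G)|\,f(\PR_v), & \text{if } x_v \neq 1, \end{cases}\]
which is non-negative since $f$ is real valued and non-negative and $|\Hom(\FF_v^{\times},G)| \geq 1$. Moreover, this difference vanishes if and only if either $x_v = 1$ or $f(\PR_v) = 0$, since in the non-trivial case the factor $|\Hom(\FF_v^{\times},G)|$ is strictly positive.

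Next I would promote this pointwise inequality to the level of class functions. The difference $\lambda_1 - \lambda_x$ is an $S$-frobenian function (by Lemma \ref{lambda} applied to both $\lambda_1$ and $\lambda_x$, and linearity), with associated class function $\vartheta_1 - \vartheta_x$ defined on a Galois group $\Gamma$. By Chebotarev density, every conjugacy class of $\Gamma$ contains infinitely many Frobenius elements $\Frob_v$ with $v \notin S$, so the non-negativity of $\lambda_1(v) - \lambda_x(v)$ outside $S$ forces $\vartheta_1(\sigma) - \vartheta_x(\sigma) \geq 0$ for every $\sigma \in \Gamma$. Averaging against the (strictly positive) Haar probability measure on $\Gamma$ gives $\varpi(k,G,f) - \varpi(k,G,f,x) \geq 0$, with equality if and only if $\vartheta_1 = \vartheta_x$ pointwise on $\Gamma$, equivalently $\lambda_1(v) = \lambda_x(v)$ for all $v \notin S$. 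By the computation above, this in turn is equivalent to the condition that $f(\PR_v) \neq 0$ implies $x_v = 1$ for every $v \notin S$, i.e.\ to $x \in \mathcal{X}(k,G,f)$.

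For the second statement, I would simply substitute into Lemma \ref{loc fou}. For any $x \in \mathcal{X}(k,G,f)$ and any $v \notin S$ there are two cases: if $x_v = 1$ the equality $\hat{f}_{v,G}(x_v;1) = \hat{f}_{v,G}(1;1)$ is trivial, and if $x_v \neq 1$ then $f(\PR_v) = 0$ by definition of $\mathcal{X}(k,G,f)$, so both $\hat{f}_{v,G}(x_v;1) = 1 - f(\PR_v)q_v^{-1}$ and $\hat{f}_{v,G}(1;1) = 1 + (|\Hom(\FF_v^{\times},G)|-1)f(\PR_v)q_v^{-1}$ collapse to $1$. The main subtlety is only the Chebotarev step in the first part; once pointwise non-negativity is established, the equivalence falls out from basic measure-theoretic considerations.
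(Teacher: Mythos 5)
Your proof is correct and follows essentially the same approach as the paper: compare $\lambda_x$ with $\lambda_1$ place by place using the explicit formula, observe $\lambda_1 - \lambda_x \geq 0$ with equality precisely on the set cut out by $\mathcal{X}(k,G,f)$, and deduce the equality of local Fourier transforms directly from Lemma \ref{loc fou}. The one place you are more explicit than the paper is the Chebotarev step: the paper simply asserts that a single place $v \notin S$ with $\lambda_x(v) < \lambda_1(v)$ forces $\varpi(k,G,f,x) < \varpi(k,G,f)$, whereas you spell out that pointwise non-negativity of $\lambda_1 - \lambda_x$ on places lifts to pointwise non-negativity of the class function $\vartheta_1 - \vartheta_x$ on the Galois group and that strict inequality at one Frobenius class gives strict inequality of means; this is the right justification and a worthwhile addition.
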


\begin{proof}
Suppose $ x \in \mathcal{X}(k, G, f)$. Then $ x_v = 1 \in k_v^{\times} \otimes G^{\wedge}$ when $f(\PR_v) \neq 0$. By definition, when $x_v = 1$, the function $\lambda$ is given by $\lambda_x(v) = (|\Hom(\FF_v^{\times},G)|-1)f(\mathfrak{p}_v)$ and the mean $\varpi(k,G,f,x)$ of this function is exactly $\varpi(k,G,f,1) = \varpi(k,G,f)$. If $x \not\in \mathcal{X}(k, G, f)$ then there is some $v \not\in S$ such that $f(\PR_v) \neq 0$ but $x_v$ is non-trivial in $k_v^{\times} \otimes G^{\wedge}$. Then \[-f(\mathfrak{p}_v) < (|\Hom(\FF_v^{\times},G)|-1)f(\mathfrak{p}_v) \] since $\Hom(\FF_v^{\times},G)$ always contains the trivial homomorphism. It follows that in this case $\varpi(k,G,f,x) < \varpi(k,G,f, 1)= \varpi(k,G,f) $ and thus the equality only holds for $x \in \mathcal{X}(k,G,f)$. For $x \in \mathcal{X}(k,G, f)$ and $v \not\in S$, we have \[\hat{f}_{v, G}(x_v;1)  = 1+(|\Hom(\FF_v^{\times}, G)|-1)f(\mathfrak{p}_v)q_v^{-1}\] which is exactly $\hat{f}_{v, G}(1;1)$.
\end{proof}

\subsubsection{Calculating the leading constant}\label{calc leading const}

In this section we calculate the explicit formula for the leading constant. From Proposition \ref{asympt form} we know that \[c_{k,G,f} = \frac{1}{\Gamma(\varpi)|\mathcal{O}_k^{\times}\otimes G^{\wedge}|}\sum_{\substack{\varpi = \varpi(k,G,f,x) \\ x \in \mathcal{O}_S^{\times}\otimes G^{\wedge}}}\lim_{s \rightarrow 1}(s-1)^{\varpi}\hat{f}_{G}(x;s).\] We will focus on the inner sum \begin{equation}
         \sum_{\substack{\varpi = \varpi(k,G,f,x)\\ x \in \mathcal{O}_S^{\times}\otimes G^{\wedge}}}\lim_{s \rightarrow 1}(s-1)^{\varpi}\hat{f}_{G}(x;s).
     \end{equation} We know from Lemma $3.14$ that for $x \in \mathcal{O}_S^{\times}\otimes G^{\wedge}$, we have $\varpi = \varpi(k,G,f,x)$ exactly when $x \in \mathcal{X}(k, G, f)$. Furthermore, from Proposition \ref{analytic prop}, \[\lim_{s \rightarrow 1}(s-1)^{\varpi}\hat{f}_{G}(x;s) = (\Res_{s=1} \zeta_k (s))^{\varpi} \prod_{v}\frac{\hat{f}_{v, G}(x_v;1)}{\zeta_{k,v}(1)^{\varpi}}\] so we can write $(3.9)$ as \[ (\Res_{s=1} \zeta_k (s))^{\varpi} \sum_{x \in \mathcal{X}(k,G, f)}\prod_{v}\frac{\hat{f}_{v, G}(x_v;1)}{\zeta_{k,v}(1)^{\varpi}}\]
and \[
\sum_{x \in \mathcal{X}(k,G, f)}\prod_{v}\frac{\hat{f}_{v, G}(x_v;1)}{\zeta_{k,v}(1)^{\varpi}}
 = 
 \sum_{x \in \mathcal{X}(k,G, f)}\left( \prod_{v \not\in S}\frac{\hat{f}_{v, G}(x_v;1)}{\zeta_{k,v}(1)^{\varpi}} \times \prod_{v \in S}\frac{\hat{f}_{v, G}(x_v;1)} {\zeta_{k,v}(1)^{\varpi}}\right) 
        \] We will consider separately the factors corresponding to places $v \not\in S$ and those corresponding to places $v \in S$. By Lemma $3.14$, in the case $v \not\in S$, we have that $\hat{f}_{v, G}(x_v;1) = \hat{f}_{v, G}(1;1)$ for $x \in \mathcal{X}(k, G, f)$, and so
        
        \[
           \hat{f}_{v, G}(x_v;1) = \sum_{\chi_v \in \Hom(\mathcal{O}_v^{\times},G)} \frac{f_{v}(\chi_v)\langle \chi_v, 1 \rangle}{\Phi_v(\chi_v)}  = \sum_{\chi_v \in \Hom(\mathcal{O}_v^{\times},G)} \frac{f_{v}(\chi_v)}{\Phi_v(\chi_v)}.
        \]
On the other hand, for places $v \in S$, we have \begin{align*}
   \prod_{v \in S} \hat{f}_{\Lambda_v}(x_v;1) = & \frac{1}{|G|^{|S_{\fin}|}} \prod_{v \in S} \sum_{\chi_v \in \Hom(k_v^{\times},G)} \frac{f_{v}(\chi_v)\langle \chi_v, x_v \rangle}{\Phi_v(\chi_v)} \\
   = &  \frac{1}{|G|^{|S_{\fin}|}} \sum_{\substack{\chi_v \in \Hom(\prod_{v \in S} k_v^{\times},G) \\ \chi_v \in \Lambda_v}} \frac{1}{\prod_{v \in S}\Phi_v(\chi_v)}\prod_{v \in S}\langle \chi_v, x_v \rangle.
\end{align*}
Then after summing over $\mathcal{X}(k, G, f)$ we obtain the expression 

\begin{align*}
    \sum_{x \in \mathcal{X}(k, G, f)}\prod_{v \in S}\frac{\hat{f}_{v, G}(x_v;1)}{\zeta_{v,k}(1)^{\varpi}} = \frac{1}{|G|^{|S_{\fin}|}}  \sum_{\substack{\chi_v \in \Hom(\prod_{v \in S} k_v^{\times},G) \\ \chi_v \in \Lambda_v}} &  \frac{1}{\prod_{v \in S} \Phi_v(\chi_v) \zeta_{v,k}(1)^{\varpi}} \\
&  \times \sum_{x \in \mathcal{X}(k, G, f)}\prod_{v \in S}\langle \chi_v, x_v \rangle.
\end{align*} This gives the expression in Theorem \ref{main thm}.

\subsection{Positivity of the Leading Constant}

We want to show that for a real valued and non-negative frobenian multiplicative function $f$, the leading constant $c_{k,G,f}$ in Theorem \ref{main thm} is positive if there exists a sub-$G$-extension $\varphi \in \Hom(\Gamma_k, G)$ that satisfies all the local conditions imposed at places $v \in S$ and $f(\varphi) \neq 0$. We will check the terms for $v \not\in S$ and those for $v \in S$ separately. The factors for $v \not\in S$ are \[\sum_{x \in \mathcal{X}(k,G, f)} \prod_{v \not\in S}\zeta_{k,v}(1)^{-\varpi}\sum_{\chi_v \in \Hom(\mathcal{O}_v^{\times},G)} \frac{f_v(\chi_v)}{\Phi_v(\chi_v)}.\] The set $\Hom(\mathcal{O}_v^{\times},G)$ always contains the trivial character $\mathbbm{1}$. By definition $f$ is multiplicative and so $\frac{f_v(\chi_v)}{\Phi_v(\chi_v)} = 1$ for the trivial character. Thus the factors for $v \not\in S$ are positive.

For $v \in S$, it suffices to check the contribution given by $\sum_{x \in \mathcal{X}(k,G, f)} \prod_{v \in S}\langle \chi_v,x_v \rangle$. By character orthogonality we have \[\sum_{x \in \mathcal{X}(k, G, f)} \prod_{v \in S}\langle \chi_v,x_v \rangle = \begin{cases}
    | \mathcal{X}(k,G, f)|, & \text{if} \: \prod_{v\in S}\chi_v \: \text{trivial on} \: \mathcal{X}(k, G, f) \\
    0, & \text{otherwise},
\end{cases}\] so it follows that the sum is positive as long as there is some $\chi \in \Hom(\prod_{v\in S} k_v^{\times}, G)$ such that $\prod_{v\in S}\chi_v$ is trivial on $\mathcal{X}(k,G, f)$. By assumption there is some sub-$G$-extension that realises all the local conditions for $v \in S$, and at this sub-$G$-extension $f$ takes non-zero value. Let $\psi: \AAA^{\times}/k^{\times} \rightarrow G$ be the homomorphism coming from this sub-$G$-extension of $k$. Clearly, for every $x \in k^{\times} \otimes G^{\wedge}$ the relation $\prod_{v \in S} \langle \psi_v, x_v \rangle = 1$ holds and in particular, \[\prod_{v \in S}\langle \psi_v, x_v \rangle = \prod_{v \not\in S} \frac{1}{\langle \psi_v, x_v \rangle}. \] Therefore we have $\prod_{v \in S}\langle \psi_v, x_v \rangle = 1$ if and only if $\prod_{v \not\in S}\langle \psi_v, x_v \rangle = 1$, and it is enough to show  $\langle \psi_v, x_v \rangle = 1$ for every $v \not\in S$ and $ x \in \mathcal{X}(k, G, f)$. But for $v \not\in S$, we have that $x \in \mathcal{X}(k, G, f)$ means that $x_v = 1 \in k_v^{\times}\otimes G^{\wedge}$ for every $v$ such that $f(\mathfrak{p}_v) \neq 0$. In particular for the sub-$G$-extension satisfying all the local conditions we have $x_v = 1 \in k_v^{\times}$. Thus $\langle \psi_v, x_v \rangle = 1$ for every $v \not\in S$ and $ x \in \mathcal{X}(k, G, f)$ and the result follows. This completes the proof of Theorem $3.7$. \hfill \qedsymbol

\section{Proof of results}

\subsection{Proof of Theorem $1.1$}

The proof of Theorem $1.1$ follows from setting the frobenian multiplicative function $f$ to be the indicator function $f_{\square}$ of the sums of two squares. Consider the counting function \[N(k,G,f_{\square},B) = \sum_{\substack{\varphi \in G\ext(k) \\ \Phi(\varphi) \leq B}} f_{\square}(\varphi)\] that counts $G$-extensions whose conductor is of bounded norm and is the sum of two squares. 
By applying Theorem \ref{main thm} to $f_{\square}$ we obtain an asymptotic for the function $N(k,G,B,f_{\square})$ defined above of the form \[N(k,G,B,f_{\square}) = c_{k,G,f_{\square}}B(\log B)^{\varpi(k,G,f_{\square})-1}(1+o(1)),\] where $\varpi(k,G,f_{\square})$ is as in Theorem $1.1$ and $c_{k,G,f_{\square}}$ a non-negative constant. We emphasise that once again, this counting function and the leading constant differ by a factor of $|\text{Aut}(G)|$ to those in Theorem $1.1$ as we are counting by $G$-extensions. To find an explicit expression for the invariant it is necessary to first determine the $S$-frobenian function $\lambda_x : = \rho_x \cdot f$ from $\S 3.5$ in the case that $f$ is the indicator function of the sums of two squares. We obtain the function given by \[\lambda_x(v) = \begin{cases}
    |\text{Hom}(\FF_v^{\times}, G)| - 1, & \text{if} \: x_v =1 \: \text{and} \: q_v \equiv 1 \bmod 4, \\
    -1, & \text{if} \: x_v \: \text{is non-trivial and} \: q_v \equiv 1 \bmod 4, \\
    0, & \text{if} \: q_v \equiv 3 \bmod 4,
\end{cases}\]and we denote the mean of this function by $\varpi(k,G,f_{\square}, x)$. Furthermore,  the exponent $\varpi(k,G,f_{\square})$ which we defined in $\S 3.5$ to be $\varpi(k,G,f_{\square},1)$ is the mean of the $S$-frobenian function given by \[\lambda'(v) = \begin{cases}
    |\text{Hom}(\FF_v^{\times}, G)| - 1, & \text{if} \: q_v \equiv 1 \bmod 4, \\
    0, & \text{if} \: q_v \equiv 3 \bmod 4.
    \end{cases}\]
\begin{prop}
    Let $e$ be the exponent of $G$. Then the invariant $\varpi(k,G,f_{\square})$ is given as follows:
\[\varpi(k,G,f_{\square}) = \frac{1}{2}\sum_{\substack{g \in G \backslash\{\id_G\} \\ \mu_4 \not\subset k(\zeta_{|g|})}}\frac{1}{[k(\zeta_{|g|}):k]} + \sum_{\substack{g \in G \backslash\{\id_G\} \\ \mu_4 \subset k(\zeta_{|g|})}}\frac{1}{[k(\zeta_{|g|}):k]}.\] 
\end{prop}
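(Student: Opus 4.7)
The plan is to identify the class function of $f_\square$, combine it with the class function of $\rho_1$ from $\S 3.5$, and then compute the mean by adapting the proof of Lemma~$3.15$ of \cite{Frei_2022} to work over the base field $k(\zeta_4)$ instead of $k$.

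First, I would identify the class function of $f_\square$. For $v \not\in S$, we have $f_\square(\PR_v) = 1$ when $q_v \equiv 1 \bmod 4$ and $0$ when $q_v \equiv 3 \bmod 4$. Since $q_v \equiv 1 \bmod 4$ is equivalent to $\Frob_v$ being trivial on $k(\zeta_4)$, the class function of $f_\square$ is $\varphi \colon \Gal(k(\zeta_4)/k) \to \CC$ with $\varphi(\sigma) = \mathbbm{1}_{\sigma = \id}$. By Lemma~\ref{lambda}, the class function $\vartheta$ of $\lambda'$ is defined on $\Gal(L/k)$, where $L := k(\zeta_e) \cdot k(\zeta_4) = k(\zeta_{\operatorname{lcm}(e,4)})$, and satisfies $\vartheta(\sigma) = (|G[\psi(\sigma)]| - 1)\cdot \mathbbm{1}_{\sigma|_{k(\zeta_4)} = \id}$, where $\psi$ is the class function attached to $d_G$ in $\S 3.5$.

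Next I would take the mean. The indicator $\mathbbm{1}_{\sigma|_{k(\zeta_4)} = \id}$ restricts the sum to $\sigma \in \Gal(L/k(\zeta_4))$, and so
\begin{equation*}
\varpi(k,G,f_\square) = \frac{1}{[k(\zeta_4):k]} \cdot \frac{1}{[L:k(\zeta_4)]} \sum_{\sigma \in \Gal(L/k(\zeta_4))} (|G[\psi(\sigma)]| - 1).
\end{equation*}
Since $L = k(\zeta_4)(\zeta_e)$, the inner average is exactly the quantity computed in the proof of Lemma~$3.15$ of \cite{Frei_2022} with base field $k(\zeta_4)$ in place of $k$. Rewriting $|G[\psi(\sigma)]| - 1 = \#\{g \in G \setminus \{\id\} : \sigma|_{k(\zeta_{|g|})} = \id\}$ and swapping the order of summation yields
\begin{equation*}
\varpi(k,G,f_\square) = \frac{1}{[k(\zeta_4):k]} \sum_{g \in G \setminus \{\id\}} \frac{1}{[k(\zeta_4,\zeta_{|g|}):k(\zeta_4)]} = \sum_{g \in G \setminus \{\id\}} \frac{1}{[k(\zeta_4,\zeta_{|g|}):k]}.
\end{equation*}

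To conclude, I would split the sum according to whether $\mu_4 \subset k(\zeta_{|g|})$. If $\mu_4 \subset k(\zeta_{|g|})$ then $k(\zeta_4,\zeta_{|g|}) = k(\zeta_{|g|})$, contributing $1/[k(\zeta_{|g|}):k]$. Otherwise, adjoining $\zeta_4 = i$ to $k(\zeta_{|g|})$ produces a degree-two extension, so $[k(\zeta_4,\zeta_{|g|}):k] = 2[k(\zeta_{|g|}):k]$ and the contribution is $1/(2[k(\zeta_{|g|}):k])$. Combining the two cases produces exactly the stated formula. The main technical point is verifying that the identity underlying Lemma~$3.15$ of \cite{Frei_2022} continues to hold with base field $k(\zeta_4)$; since its proof rests only on Galois theory and the definition of $\psi$, both of which are stable under the base change $k \rightsquigarrow k(\zeta_4)$, this extension is routine.
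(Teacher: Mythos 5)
Your proposal is correct and follows essentially the same route as the paper: identify the class function of $f_\square$ as the indicator of fixing $\mu_4$, reduce the mean to an average over $\Gal(k(\zeta_{\mathrm{lcm}(e,4)})/k(\zeta_4))$, obtain $\sum_{g\neq\id}1/[k(\zeta_4,\zeta_{|g|}):k]$, and split according to whether $\mu_4\subset k(\zeta_{|g|})$. The only cosmetic difference is that the paper routes the inner average through the partition $\{\Sigma_d\}$ and cites \cite[Lem.~3.15]{Frei_2022} directly, whereas you recompute that average by swapping the order of summation — which is what the cited lemma does anyway, so your version is just slightly more self-contained.
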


\begin{proof}

The class function for $\lambda'$ is the function given by \[\vartheta(\sigma) = (|G[\psi(\sigma)]|-1)\varphi(\sigma)\] defined on $\Gal(k(\zeta_e)k(\zeta_4)/k)$. Here $\varphi$ is the class function associated to $f_{\square}$, and satisfies \[\varphi ( \Frob_v) = \begin{cases}
    1, & q_v \equiv 1 \bmod 4, \\
    0, & q_v \equiv 3 \bmod 4.
\end{cases}\] Equivalently, it is the function defined on $\Gal(k(\zeta_4)/k)$ such that \[\varphi(\sigma) = \begin{cases}
    1, & \sigma = \id \\
    0, & \sigma = c,
\end{cases}\] where $c$ is complex conjugation. In particular, it indicates whether or not $\sigma$ fixes $\mu_4$. Let $\ell = \text{lcm}(e,4)$ and $\Gamma = \Gal(k(\zeta_{\ell})/k)$. Then \[
    m(\lambda') =  \frac{1}{|\Gamma|}\sum_{\sigma \in \Gamma}\vartheta(\sigma) = \frac{1}{|\Gamma|}\sum_{\sigma \in \Gamma}(|G[\psi(\sigma)]|-1)\varphi(\sigma).
\] Now $\varphi(\sigma)$ is non-zero only if $\sigma$ fixes $\mu_4$, that is, if $\sigma \in \Gal(k(\zeta_{\ell})/k(\zeta_4))$ so we can write the above sum as \[m(\lambda') = \frac{1}{[k(\zeta_{\ell}):k]}\sum_{\sigma \in \Gal(k(\zeta_{\ell})/k(\zeta_4))}(|G[\psi(\sigma)]|-1).\] We will write $k_4 = k(\zeta_4)$. Using the definition of the class function $\psi$ and recalling that the set $\Sigma_d$ is given by $\Sigma_d = \Gal(k(\zeta_{e})/k(\zeta_d)) \backslash \bigcup_{\substack{d'|\frac{e}{d}\\ d' \neq 1}} \Gal(k(\zeta_{e})/k(\zeta_{dd'}))$ we can further rewrite this as \[m(\lambda') =  \frac{1}{[k(\zeta_{\ell}):k]} \sum_{d|\ell}(|G[d]|-1)|\Sigma_d|.\] Then it follows from \cite[Lem.~$3.15$]{Frei_2022} that this is equal to \[m(\lambda') =  \sum_{g \in G\backslash\{\id_G\}} \frac{1}{[k_4(\zeta_{|g|}): k]}.\]

If $g$ is such that $\mu_4 \subset k(\zeta_{|g|})$, then $k_4(\zeta_{|g|}) = k(\zeta_4,\zeta_{|g|}) = k(\zeta_{|g|})$. On the other hand if $g$ is such that $\mu_4 \not\subset k(\zeta_{|g|})$, then $k_4(\zeta_{|g|}) = k(\zeta_4,\zeta_{|g|}) = k(\zeta_4)k(\zeta_{|g|})$. We have \begin{align*}
     \sum_{g \in G\backslash\{\id_G\}} \frac{1}{[k_4(\zeta_{|g|}): k]}& = \sum_{\substack{g \in G \backslash\{\id_G\} \\ \mu_4 \not\subset k(\zeta_{|g|})}}\frac{1}{[k_4(\zeta_{|g|}):k]} + \sum_{\substack{g \in G \backslash\{\id_G\} \\ \mu_4 \subset k(\zeta_{|g|})}}\frac{1}{[k_4(\zeta_{|g|}):k]} \\
     =& \sum_{\substack{g \in G \backslash\{\id_G\} \\ \mu_4 \not\subset k(\zeta_{|g|})}}\frac{1}{[k(\zeta_4)(\zeta_{|g|}):k]} + \sum_{\substack{g \in G \backslash\{\id_G\} \\ \mu_4 \subset k(\zeta_{|g|})}}\frac{1}{[k(\zeta_{|g|}):k]}= \varpi(k,G,f_{\square}).
\end{align*}  Thus the calculation of the mean reduces to the calculation in the proof of \cite[Lem.~$3.15$]{Frei_2022} and the result follows. \end{proof}

Finally, to prove the Corollary \ref{cor 1} we will use the explicit form of the leading constant to show that it is positive. This follows

By setting $f = f_{\square}$ in the set $\mathcal{X}(k,G,f)$ from Section $3.7$, we obtain the finite set \begin{align*}
    \mathcal{X}(k,G, f_{\square}) = \{x \in \mathcal{O}_S^{\times} \otimes G^{\wedge} : \: \text{for all places $v$ with } & \;  q_v \equiv 1 \bmod 4, \\
     & x_v = 1 \in k_v^{\times} \otimes G^{\wedge} \: \forall \;v \not\in S\}.\end{align*} and by setting $f = f_{\square}$ in the explicit formula for the leading constant in \ref{main thm}, we obtain \begin{align*} c_{k, G,f_{\square}} = \frac{(\Res_{s=1} \zeta_k (s))^{\varpi}}{\Gamma(\varpi)|\mathcal{O}_k^{\times}\otimes G^{\wedge}|}\sum_{x \in \mathcal{X}(k, G, f_{\square})} \prod_{\substack{v \not\in S \\q_v \equiv 1 \bmod 4}}\zeta_{v,k}(1)^{-\varpi} \sum_{\chi_v \in \Hom(\mathcal{O}_v^{\times},G)} \frac{1}{\Phi_v(\chi_v)} \: \times \\
        \frac{1}{|G|^{|S_{\fin}|}} \sum_{\chi_v \in \Hom(\prod_{v \in S} k_v^{\times},G) } \prod_{v \in S}\frac{f_{\square_v}(\chi_v)}{ \Phi_v(\chi_v)\zeta_{v,k}(1)^{\varpi}}\sum_{x \in \mathcal{X}(k,G, f_{\square})} \prod_{v \in S}\langle \chi_v,x_v \rangle. \end{align*} To prove Corollary \ref{cor 1}, it is enough to show that the leading constant $c_{k, G,f_{\square}}$ is always positive. This is obtained by taking the sub-$G$-extension $\psi$ in Theorem $3.7$ to be the one corresponding to the trivial character, which gives non-vanishing.

\subsection{Completely split primes in finite extensions}

In this subsection we fix a finite field extension $L/k$. Another example of a frobenian multiplicative function is the indicator function $f_{CS}$ of those ideals such that every prime appearing in its prime factorisation is completely split. If $E/k$ is an abelian field extension, the global conductor $\mathfrak{c}(E/k)$ is the product \[\mathfrak{c}(E/k) = \prod_{\mathfrak{p}} \mathfrak{p}^{n}\] where $n=0$ for all but the finitely many ramified primes $\mathfrak{p}$ of $E/k$. Thus $f_{CS}(\mathfrak{c}(E/k)) = 1$ if and only if every prime that ramifies in $E/k$ is completely split in $L/k$. Using this function we can prove the following result on the splitting of primes in finite extensions.
\begin{prop} \label{cor 2}
    Let $k$ be a number field, $G$ a finite abelian group and $L/k$ a finite field extension. Then there exists a finite abelian extension $E/k$ such that every prime that ramifies in $E/k$ splits completely in $L/k$.
\end{prop}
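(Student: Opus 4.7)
The plan is to apply Theorem~\ref{main thm} to the indicator function $f_{CS}$ and then deduce existence of $E/k$ from positivity of the leading constant, in complete analogy with the proof of Corollary~\ref{cor 1}. Fix any non-trivial finite abelian group $G$, for instance $G = \ZZ/2\ZZ$, and consider the counting function $N(k,G,B,f_{CS})$ of $G$-extensions whose conductor is supported only at primes that split completely in $L/k$.

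First I would verify that $f_{CS}$ is a frobenian multiplicative function. It is manifestly multiplicative and bounded by $1$, so the size conditions hold trivially. Its associated class function $\varphi \colon \Gal(\tilde L/k) \to \CC$ on the Galois closure $\tilde L/k$ of $L$ is the indicator of the identity, since a prime $\PR_v$ unramified in $\tilde L/k$ satisfies $f_{CS}(\PR_v) = 1$ iff $\Frob_v = \id$ in $\Gal(\tilde L/k)$. The mean is $m(f_{CS}) = 1/[\tilde L:k] > 0$. I would then choose $S$ as in \S 3.1, enlarged to contain the places ramified in $\tilde L/k$, and at each $v \in S$ take $\Lambda_v = \{\mathbbm 1\}$ consisting of the trivial sub-$G$-extension, on which $f_{CS}$ takes the value $1$.

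Theorem~\ref{main thm} then produces an asymptotic $c_{k,G,f_{CS}} B(\log B)^{\varpi(k,G,f_{CS})-1}(1+o(1))$. I would finish by showing both $\varpi(k,G,f_{CS}) > 0$ and $c_{k,G,f_{CS}} > 0$, which together force $N(k,G,B,f_{CS}) \to \infty$. For positivity of the leading constant, I would invoke the criterion stated at the end of Theorem~\ref{main thm}: it suffices to exhibit a single sub-$G$-extension $\psi$ meeting the local conditions at each $v \in S$ with $f_{CS}(\psi) \neq 0$. The trivial character $\psi = \mathbbm 1 \in \Hom(\Gamma_k, G)$ does this, as it is unramified everywhere (so $\psi_v = \mathbbm 1 \in \Lambda_v$ at each $v \in S$) and its conductor is the unit ideal, on which $f_{CS}$ evaluates to $1$.

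The main delicate point is verifying that $\varpi(k,G,f_{CS}) > 0$. When $\tilde L$ and $k(\zeta_e)$ are linearly disjoint over $k$ (where $e$ is the exponent of $G$), Proposition~\ref{inv ld} gives the clean product formula $\varpi = m(f_{CS})\sum_{g \neq \id} 1/[k(\zeta_{|g|}):k] > 0$. In general one falls back on the integral expression $\varpi = \int_{\Gamma_k} \vartheta(\gamma)\,d\gamma$; since $\vartheta$ is non-negative and, by Chebotarev, the set of places that simultaneously split completely in $\tilde L/k$ and satisfy $|\Hom(\FF_v^\times, G)| \geq 2$ has positive density, the integrand is strictly positive on a set of positive Haar measure, yielding $\varpi > 0$.
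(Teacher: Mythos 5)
Your proposal is correct and follows essentially the same route as the paper: apply Theorem~\ref{main thm} to the indicator function $f_{CS}$ of ideals supported on primes split completely in $L/k$, and obtain positivity of the leading constant by taking the trivial character as the witnessing sub-$G$-extension. A minor point in your favour: you correctly work with the Galois closure $\tilde L$ when computing $m(f_{CS})=1/[\tilde L:k]$, whereas the paper writes $1/[L:k]$, which is only accurate when $L/k$ is Galois; you also explicitly address the general (non--linearly-disjoint) case for $\varpi>0$ via the integral formula and Chebotarev, which the paper leaves implicit.
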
 As in the previous section, this result is obtained by applying Theorem \ref{main thm} to the frobenian multiplicative function $f_{CS}$ and showing that the leading constant is always positive, which is achieved by using the trivial character.

Furthermore, when restricted to the primes of $L/k$, the function $f_{CS}$ is the indicator function for the set of completely split primes in and thus its mean is given by the density of this set of primes. By the Chebotarev density theorem this density is $\frac{1}{[L:k]}$. So by Proposition \ref{inv ld}, if $L$ is linearly disjoint from the field $k(\zeta_e)$, the invariant $\varpi(k,G,f_{CS})$ is given by \[\varpi(k,G,f_{CS}) = \frac{1}{[L:k]} \sum_{g \in G \backslash \{\text{id}_G \}}\frac{1}{[k(\zeta_{|g|}): k]}.\]

\section{The classifying stack $BG$}

There has been a recent interest in interpreting Malle's conjecture in terms of stacks, and work has been done on this by Ellenberg, Santriano and Zureick-Brown in \cite{ellenbergheights} and by Darda and Yasuda in \cite{darda2024batyrevmaninconjecturedmstacks}. We will now rephrase some of our results in this setting. 

In this section we will prove a result relating Brauer groups of stacks and the condition that the discriminant of a quadratic number field is the sum of two squares. To do so we will first introduce some related terminology. Let $G$ be a finite \'etale group scheme over a field $k$. We will consider the classifying stack $BG$, which we define to be the quotient stack \[BG = \left[ \text{Spec}(k) / G \right]\] which classifies $G$ torsors and their automorphisms. The group scheme $G$ acts on the fibres of the quotient map \[\text{Spec}(k) \rightarrow \left[ \text{Spec}(k) / G \right] \] making it into a $G$-torsor (in fact this is the universal $G$-torsor). The $k$ points of $BG$ form the groupoid $BG(k)$ of right $G$-torsors over $\text{Spec}(k)$, where the isomorphisms are $G$-equivariant morphisms. When $G$ is a constant group scheme, this is equivalent as a category to the groupoid of homomorphisms $\Gamma_k \rightarrow G$, with morphisms given by conjugation in $G$. We are interested in the case where $k = \QQ$ and $G$ is the constant group scheme $\ZZ/2\ZZ$. By the above discussion $B\ZZ/2\ZZ$ has a $\ZZ/2\ZZ$ torsor given by $\pi :\text{Spec}(\QQ) \rightarrow B\ZZ/2\ZZ$. Let $[\pi] \in \ch^{1}(B\ZZ/2\ZZ, \ZZ/2\ZZ)$ be its cohomology class and $-1 \in \ch^{1}(B\ZZ/2\ZZ, \mu_2)$. Consider the cup product $-1 \cup [\pi] \in \ch^{2}(B\ZZ/2\ZZ, \mu_2)$ induced by the pairing $\ZZ/2\ZZ \times \mu_2 \rightarrow \mu_2$, and its image in the Brauer group $\br(B\ZZ/2\ZZ)$. We will use this Brauer group element to prove the following proposition.

\begin{prop}
    Let $K/\QQ$ be a quadratic extension and $\Delta_{K/\QQ}$ be its absolute discriminant. Then $\Delta_{K/\QQ}$ is the sum of two squares if and only if the Brauer group element $(-1, [\pi]) \in \br (B\ZZ/2\ZZ)$ specialises trivially in $\br\QQ$.
\end{prop}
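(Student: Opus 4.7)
The plan is to interpret the specialization as a Hilbert symbol computation in $\br(\QQ)$, and then invoke the classical sum of two squares theorem. Writing $K = \QQ(\sqrt{d})$ for a unique squarefree integer $d \neq 1$, the extension $K/\QQ$ corresponds to a classifying morphism $\phi_K : \text{Spec}(\QQ) \to B\ZZ/2\ZZ$. By the general formalism of classifying stacks, ``specialisation at $K$'' means pullback along $\phi_K$, so the task reduces to computing $\phi_K^*(-1 \cup [\pi]) \in \br(\QQ)$ and identifying when it vanishes.

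For the key computation, I would exploit that pullback commutes with the cup product, writing
\[ \phi_K^*(-1 \cup [\pi]) \;=\; \phi_K^*(-1) \,\cup\, \phi_K^*[\pi]. \]
The first factor is simply $-1 \in \ch^1(\QQ,\mu_2) \cong \QQ^\times/(\QQ^\times)^2$. The second factor is the cohomology class of the $\ZZ/2\ZZ$-torsor obtained by pulling back the universal torsor $\pi$ along $\phi_K$, which by construction of $\phi_K$ is the torsor $\text{Spec}(K) \to \text{Spec}(\QQ)$. Identifying $\ZZ/2\ZZ$ with $\mu_2$ in characteristic zero and applying Kummer theory, this class corresponds to $[d] \in \QQ^\times/(\QQ^\times)^2$. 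Consequently the specialization equals the class of the quaternion algebra $(-1,d)$ in $\br(\QQ)[2]$.

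To conclude, I would invoke the classical fact that $(-1,d) = 0$ in $\br(\QQ)$ if and only if $d$ is a norm from $\QQ(i)/\QQ$, equivalently $d = a^2 + b^2$ for some $a,b \in \QQ$. For a nonzero squarefree integer $d$, this is equivalent to $d > 0$ (the obstruction at the real place) together with every prime $p \equiv 3 \pmod 4$ appearing to an even power in $d$; i.e.\ $d$ is a sum of two integer squares. To link this to the discriminant, recall that $\Delta_{K/\QQ} = d$ if $d \equiv 1 \pmod 4$ and $\Delta_{K/\QQ} = 4d$ otherwise. Since $4d = a^2 + b^2$ forces $a$ and $b$ both even (squares modulo $4$ lie in $\{0,1\}$), the integer $4d$ is a sum of two squares iff $d$ is. Thus the condition on $\Delta_{K/\QQ}$ is equivalent to the condition on $d$, matching the Brauer-theoretic criterion.

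The main obstacle is the functoriality step: carefully identifying $\phi_K^*[\pi]$ with the Kummer class of $d$ requires tracking the conventions for the classifying stack $B\ZZ/2\ZZ$, its universal torsor $\pi$, and the cup product pairing $\ZZ/2\ZZ \times \mu_2 \to \mu_2$. Once this identification is in hand, the rest of the argument is classical number theory that can be read off directly from the description of $(-1,d)$ as a Hilbert symbol.
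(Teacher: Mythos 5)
Your proposal is correct and follows essentially the same route as the paper's proof: pull back along the classifying morphism to reduce to the Hilbert symbol $(-1,d)$, then characterize its vanishing via the norm form of $\QQ(i)/\QQ$. You do fill in two small details the paper glosses over: (i) the norm-form criterion a priori gives $\Delta_{K/\QQ}$ as a sum of two \emph{rational} squares, and you bridge to \emph{integer} squares by passing through the local conditions (positivity and no prime $\equiv 3 \bmod 4$ to odd order) rather than implicitly invoking Davenport--Cassels; and (ii) you check explicitly that $4d$ is a sum of two integer squares if and only if $d$ is, whereas the paper only notes that $d$ and $\Delta_{K/\QQ}$ differ by a square so that the Hilbert symbol is unchanged. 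These are genuine gaps in the paper's writeup that you close, but the underlying argument is identical.
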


\begin{proof}
Let $K = \QQ(\sqrt{d})$ and $\chi$ be the homomorphism $\chi: \Gal(\overline{\QQ}/\QQ) \rightarrow \ZZ/2\ZZ$. The specialisation of $(-1, [\pi]) $ at $\chi$ is given by $(-1,[\pi])(\chi) = (-1, K/\QQ) = (-1,d)$. Since the discriminant of a quadratic extension differs from $d$ by at most a factor of $4$, which is a square, we have that $(-1,d) = (-1, \Delta_{K/\QQ}) $. So it remains to show that $\Delta_{K/\QQ}$ is the sum of two squares if and only if $(-1, \Delta_{K/\QQ}) = 0 \in \br\QQ$. However $(-1, \Delta_{K/\QQ}) = 0 \in \br\QQ$ if and only if $\Delta_{K/\QQ} \in \text{N}_{\QQ(\sqrt{-1})/\QQ}\QQ(\sqrt{-1})^{\times}$. But this means that $\Delta_{K/\QQ} = x^{2}- (-1)y^{2} = x^{2}+y^{2}$, that is, it is the sum of two squares. \end{proof} In particular, in this special case, Theorem $1.1$ gives an asymptotic for the number of rational points of $B\ZZ/2\ZZ$ for which the Brauer group element $(-1, [\pi])$ specialises to $0$. Problems of this type for $\mathbb{P}^{n}$ have been considered by Serre in \cite{Serre2000SpcialisationD}.

\bibliographystyle{amsplain}
\bibliography{ms}

\end{document}